\theoremstyle{plain}
\newtheorem{thm}{Theorem}
\newtheorem{lem}[thm]{Lemma}
\theoremstyle{definition}
\theoremstyle{remark}
\newtheorem*{rem*}{Remark}
\newcommand{\R}{\mathbb{R}}
\renewcommand{\leq}{\leqslant}
\renewcommand{\geq}{\geqslant}
\renewcommand{\ge}{\geq}
\newcommand{\pref}[1]{(\ref{#1})}
\def\({\left(}
\def\){\right)}
\def\[{\left[}
\def\]{\right]}
\def\<{\langle}
\def\>{\rangle}
\title{Regularization inequalities for one-dimensional  Cauchy-type  measures}
\author{Tomasz Byczkowski and Tomasz \.Zak}
\address{\noindent{T. Byczkowski: Institute of Mathematics of Polish Academy of Sciences, Warsaw, Poland}\newline
T. \.Zak: Faculty of Pure and Applied Mathematics, 
  Wroc\l{}aw University of Science and Technology, Wybrze\.ze Wyspia\'nskiego 27, 50-370 Wroc{\l}aw, Poland}
\email{tomasz.zak@pwr.edu.pl}
\keywords{Cauchy distribution, Borell inequality, Landau-Shepp inequality}
\subjclass[2010]{Primary: 60E05, Secondary:60E07}
\thanks{T. Byczkowski was supported by National Science Centre, Poland, grant no. 2015/17/ST1/01233, T. \.Zak was supported by National Science Centre, Poland, grant no. 2015/17/ST1/01043}
\begin{document}
\maketitle
\begin{abstract}
In the paper we investigate various inequalities for the one-dimensional Cauchy measure. We also consider analogous properties for one-dimensional sections of multidimensional isotropic Cauchy measure. The paper is a continuation of our previous investigations \cite{BZ}, where we found, among intervals with fixed measure,  the ones with the extremal measure of the boundary.
Here for the above mentioned measures we investigate inequalities that are analogous to those found for Gaussian measures by Borell in \cite{B} and by Landau and Shepp in \cite{LS}.
\end{abstract}

\section{Introduction}
Gaussian measures occupy central place in various areas of Mathematics. We have
some important and well-known inequalities for these measures: Prekopa-Leindler (\cite{P}), Borell (\cite{B}), 
Ehrhard (\cite{E}) and Landau-Shepp (\cite{LS}). The aim of our research was to find appropriate analogues
of these inequalities for rotationally invariant, standard Cauchy measures. The first step
consisted in examining of the one-dimensional case. Even here the situation is different than 
in the Gaussian case, as half-lines are no longer minimal sets (in the sense of the measure of the boundary). 
It turned out that there are three types of minimal sets, depending on the measures (compare \cite{BZ}).
Further on, we considered one-dimensional sections of $n$-dimensional Cauchy measure
(we call them "Cauchy-type measures") and tried to apply the Steiner-Ehrhard symmetrization
procedure (see \cite{E}), which is the first step in the direction of $n$-dimensional setting.

The classical isoperimetric theorem on the plane states that among all Borel sets with fixed Lebesgue measure the circle has the smallest
perimeter. 
The multidimensional version of the theorem states that in any finite dimension there exists a set with the smallest measure of the boundary and 
this minimum is attained for the ball.
Here by ''the measure of the boundary'' we mean the following: if $A$ is a Borel set and $B_h=\{x\in \R^n:\|x\|<h\}$ we put
$A^h=A+B_h=\{x\in \R^n: \mbox{dist}(x,A)<h\}$. Then the measure of the boundary is equal to 
$
\limsup_{h\to 0^+} \frac{|A^h|-|A|}{h},\ \ \ \ \mbox{where}\ \ \ |A|\ \mbox{denotes the Lebesgue measure of}\ A. $
For simplicity of the language let us call this limit (whenever exists, finite or not), the {\it perimeter} of the set $A$.

The situation is a little different if we consider a probability measure $\mu$ on $\R^n$. This is because of the two reasons.
Firstly, the measure of any Borel set is finite, and, secondly, not only can we look for a set with a minimal measure of the boundary (perimeter),
but we can also seek a set with the maximal perimeter.

\bigskip

Let us start with the definition of the {\it perimeter} in such general situation.
To avoid problems with the existence, we restrict our consideration to convex Borel sets.
Let $A$ be such a set. Put
$$
per(A)=
\limsup_{h\to 0^+} \frac{\mu(A^h)-\mu(A)}{h},
$$
whenever the limit is finite. 

\medskip

Forty years ago mathematicians tried to generalize the isoperimetric theorem. 
Because the Gaussian distribution is one of the most important probability measures,
this problem was investigated first for these distributions. 
It turned out (compare \cite{SC} and \cite{B})  that
among all convex Borel sets in $\R^n$ with the same fixed measure, the half-space i.e. $\{x\in \R^n: x_n>a\}$ has the smallest Gaussian perimeter. 

\medskip

For  convex Borel sets that are {\bf symmetric} with respect to the origin, i.e. such that $-A=A$,
another definition of the perimeter can also be (and is) used. Namely, the authors of \cite{KS} and \cite{LO} put
$$
per(A)=\lim_{\varepsilon \to 0^+} \frac{\mu((1+\varepsilon)A)-\mu(A)}{\varepsilon}.
$$ 
It turned out that for symmetric Gaussian measures the so-called S-hypothesis is valid and a symmetric strip $\{x\in \R^n:|x_n|<a\}$ has the smallest Gaussian perimeter (see \cite{KS},\cite{LO}).

\medskip

During investigation of these isoperimetric properties of Gaussian measures in $\R^n$ many interesting and useful inequalities were found. For instance C. Borell proved 
the following theorem (Theorem 3.1 in \cite{B}), which in our finite-dimensional context can be formulated as below:
\newline
{\it Let $\mu$ be a Gaussian measure in $\R^n$, $A$ a Borel subset of $\R^n$ and let $B$ be the unit ball. Let $\mu(A)=\Phi(\alpha)$, where $\Phi$ is a distribution function of $N(0,1)$. Then for all 
$\varepsilon$>0 there holds} $$\mu( A+\varepsilon B)\ge \Phi(\alpha +\epsilon).$$

\medskip

H.J. Landau and L.A. Shepp proved the following (Theorem 4 in \cite{LS}):
\newline
{\it Let $\mu$ be a Gaussian measure in $\R^n$, $C$ a convex set and let $s$ be any number such that $\mu(C)\ge\Phi(s)$. If $s>0$ then for any $a>$ there holds}
$$\mu(aC)\ge \Phi(as).$$

\medskip

Both above inequalities have very interesting and deep consequences for Gaussian processes (compare \cite{B} and \cite{LS}).
In this paper we examine analogous inequalities for one-dimensional Cauchy and "Cauchy-type" measures.

\section{Cauchy measures}
Standard Cauchy distribution $\mu=\mu_1$ on the real line $\R^1$  has the density function
$$
f(x)=\frac{1}{\pi(1+x^2)},\ \ \  \  x \in \R
$$ 
and rotationally invariant Cauchy distribution $\mu_n$ in $\R^n$ has the one: 
$$
f_n({\bf{x}})=\frac{c_n}{(1+|{\bf{x}}|^2)^{(n+1)/2}},\ \ \ \  {\bf{x}}\in\R^n,\quad
c_n=\frac{\pi^{n/2}}{\Gamma(\frac{n}{2})}
$$

Let $\mu$ be the standard one-dimensional Cauchy measure. For $a<b$ we define $g:=g(a,b)$ by the following equality
\begin{equation}
\label{g}
\mu(-\infty, g) = \mu(a,b)
\end{equation}
$g^{*}:= g^{*}(a,b)$ is defined by the similar identity:
\begin{equation}
\label{g^{*}}
\mu(- g^{*},g^{*}) = \mu(a,b)
\end{equation}

We obtain
\begin{lem} Formulas for $g$ and $g^{*}$ are the following:
\begin{eqnarray*} 
&{}&g(a,b)= -\,\frac{1+a\/b}{b-a},\\
&{}&(g{*})^2(a,b)= \sqrt{1+g^2(a,b)} + g(a,b)
=\frac{\sqrt{1+a^2}\/\sqrt{1+a^2}-1 - a\/b}{b-a}.
\end{eqnarray*}
\end{lem}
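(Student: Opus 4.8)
The plan is to reduce everything to the distribution function of $\mu$ and then to elementary trigonometric identities. Since $f(x)=\frac{1}{\pi(1+x^{2})}$, the distribution function is $F(x)=\frac12+\frac1\pi\arctan x$, so that $\mu(a,b)=\frac1\pi(\arctan b-\arctan a)$, while $\mu(-\infty,g)=\frac12+\frac1\pi\arctan g$ and, using that $\arctan$ is odd, $\mu(-g^{*},g^{*})=\frac2\pi\arctan g^{*}$. Substituting these into the defining relation \pref{g} and the analogous one for $g^{*}$ turns both equations into identities among arctangents.

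For $g$ I would rewrite the defining relation as $\arctan g=(\arctan b-\arctan a)-\frac\pi2$ and apply $\tan$ to both sides. Using $\tan(\varphi-\frac\pi2)=-\cot\varphi$ together with the subtraction formula $\tan(\arctan b-\arctan a)=\frac{b-a}{1+ab}$ gives at once $g=-\frac{1+ab}{b-a}$. The only point needing care is the branch: for $a<b$ one has $\arctan b-\arctan a\in(0,\pi)$, so $\arctan g\in(-\frac\pi2,\frac\pi2)$ lands in the principal range and taking $\tan$ recovers the unique $g$.

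For $g^{*}$ the defining relation becomes $\arctan g^{*}=\frac12(\arctan b-\arctan a)=:\frac\theta2$, hence $g^{*}=\tan(\theta/2)$. I would compute $\cos\theta$ and $\sin\theta$ from the angle-subtraction formulas and the identities $\cos(\arctan x)=1/\sqrt{1+x^{2}}$, $\sin(\arctan x)=x/\sqrt{1+x^{2}}$, obtaining $\cos\theta=\frac{1+ab}{\sqrt{(1+a^{2})(1+b^{2})}}$ and $\sin\theta=\frac{b-a}{\sqrt{(1+a^{2})(1+b^{2})}}$, and then use the half-angle identity $\tan(\theta/2)=\frac{1-\cos\theta}{\sin\theta}$. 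This yields the stated closed form $g^{*}=\frac{\sqrt{(1+a^{2})(1+b^{2})}-(1+ab)}{b-a}$. Again one checks that $\theta/2\in(0,\frac\pi2)$, so $g^{*}>0$ and the positive half-angle value is the correct one.

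Finally, to match the form $\sqrt{1+g^{2}}+g$, I would record the algebraic identity $(b-a)^{2}+(1+ab)^{2}=(1+a^{2})(1+b^{2})$, which gives $1+g^{2}=\frac{(1+a^{2})(1+b^{2})}{(b-a)^{2}}$ and hence $\sqrt{1+g^{2}}=\frac{\sqrt{(1+a^{2})(1+b^{2})}}{b-a}$; combined with $g=-\frac{1+ab}{b-a}$ this reproduces the expression just found for $g^{*}$. The derivation is in essence bookkeeping with trigonometric identities, and I expect the only genuine subtlety — the part I would be most careful about — to be the branch and sign analysis of $\arctan$, guaranteeing that applying $\tan$ is legitimate and selects the correct root in each of the two cases.
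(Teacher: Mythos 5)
Your proof is correct, and for $g$ it is the same argument as the paper's: both convert the defining identity into $\arctan b-\arctan a=\frac{\pi}{2}+\arctan g$ and apply the tangent (your explicit branch check, $\arctan b-\arctan a\in(0,\pi)$ so $\arctan g$ lies in the principal range, is exactly the point the paper leaves tacit). For $g^{*}$ you take a genuinely different, though closely related, route. The paper starts from the equivalent relation $2\arctan g^{*}=\frac{\pi}{2}+\arctan g$, applies the double-angle formula to get $\frac{2g^{*}}{1-(g^{*})^{2}}=-1/g$, and solves the resulting quadratic in $g^{*}$; you instead write $\arctan g^{*}=\theta/2$ with $\theta=\arctan b-\arctan a$, compute $\cos\theta=\frac{1+ab}{\sqrt{(1+a^{2})(1+b^{2})}}$ and $\sin\theta=\frac{b-a}{\sqrt{(1+a^{2})(1+b^{2})}}$, and use the half-angle identity $\tan(\theta/2)=\frac{1-\cos\theta}{\sin\theta}$. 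Since $\arctan g=\theta-\frac{\pi}{2}$, the two starting relations coincide, but your extraction of $g^{*}$ buys two things: the sign analysis is automatic ($\theta/2\in(0,\frac{\pi}{2})$ forces the positive value, rather than discarding the negative root of a quadratic), and, via the identity $(b-a)^{2}+(1+ab)^{2}=(1+a^{2})(1+b^{2})$, you actually verify the closed form in $a,b$, a step the paper asserts without computation. One further point in your favor: what you prove, correctly, is $g^{*}=\sqrt{1+g^{2}}+g=\frac{\sqrt{(1+a^{2})(1+b^{2})}-1-ab}{b-a}$; the lemma as printed contains two typos, namely $(g^{*})^{2}$ where $g^{*}$ is meant (solving the paper's own quadratic yields $g^{*}=g+\sqrt{1+g^{2}}$, and the example $a=0$, $b=1$ gives $g=-1$, $g^{*}=\tan(\pi/8)=\sqrt{2}-1=\sqrt{1+g^{2}}+g$, whereas $(g^{*})^{2}=3-2\sqrt{2}$), and $\sqrt{1+a^{2}}\,\sqrt{1+a^{2}}$ where $\sqrt{1+a^{2}}\,\sqrt{1+b^{2}}$ is meant. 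Your statement and derivation are the corrected versions of both.
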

\begin{proof}
We have  straightforward computations:
\begin{eqnarray*}
&{}&\mu((a,b))=\int_a^b \frac{d\/t}{\pi(1+t^2)} =\frac{1}{\pi}(\arctan b -\arctan a),\\
&{}& \arctan\,b - \arctan\,a = \frac{\pi}{2} + \arctan\,g(a,b),\\
&{}& \frac{b-a}{1+ab} = \tan\left(\frac{\pi}{2}- \arctan(-g)\right)=\cot (\arctan (- g))=\frac1{-g}.\, 
\end{eqnarray*}
To prove the second formula we obtain
\begin{eqnarray*}
&{}&2\, \arctan \,g^{*}(a,b) = \frac{\pi}{2} + \arctan\,g(a,b)\quad {\text{so}}\\
&{}&\frac{2\,g^{*}(a,b)}{1- (g^{*}(a,b))^2}= -\, \frac{1}{g(a,b)}\,.
\end{eqnarray*}
Solving for $g{*}$ gives
$(g{*})^2(a,b)= \sqrt{1+g^2(a,b)} + g(a,b)$
\begin{equation*}
=\frac{\sqrt{1+a^2}\/\sqrt{1+a^2}-1 - a\/b}{b-a}.
\end{equation*}
\end{proof}

For standard Cauchy measure on $\R^1$ the extremality of intervals or half-lines was explained in \cite{BZ} as follows:
\begin{thm}[{Extremal intervals for Cauchy measure}]
\end{thm}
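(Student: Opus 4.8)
The plan is to reduce the extremal problem to a one‑variable optimization after a trigonometric change of coordinates. First I would compute the Cauchy perimeter of an interval explicitly. For $A=(a,b)$ one has $A^{h}=(a-h,b+h)$, so that $\mu(A^{h})-\mu(A)=\mu(a-h,a)+\mu(b,b+h)$; dividing by $h$ and letting $h\to0^{+}$ turns the $\limsup$ into an honest limit (the density $f$ is continuous) and yields $per((a,b))=f(a)+f(b)=\tfrac1\pi\bigl(\tfrac1{1+a^{2}}+\tfrac1{1+b^{2}}\bigr)$.

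Next I would substitute $\alpha=\arctan a$ and $\beta=\arctan b$, with $-\pi/2<\alpha<\beta<\pi/2$. Then $f(a)=\cos^{2}\alpha/\pi$ and $f(b)=\cos^{2}\beta/\pi$, while the constraint $\mu(a,b)=c$ becomes, by the computation in the previous Lemma, $\beta-\alpha=\pi c=:\delta$, a fixed length. Writing $m=(\alpha+\beta)/2$ for the midpoint and using $\cos^{2}\alpha+\cos^{2}\beta=1+\cos(\alpha+\beta)\cos(\alpha-\beta)$, the perimeter collapses to $per=\tfrac1\pi\bigl(1+\cos(2m)\cos\delta\bigr)$, in which the only free parameter is $m$, ranging over $|m|<(\pi-\delta)/2$, the condition that both endpoints stay finite.

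With the perimeter in this form the extremal analysis is immediate. On the admissible range one has $|2m|<\pi-\delta<\pi$, so $m\mapsto\cos(2m)$ is strictly unimodal, maximal at $m=0$ and decreasing toward $\cos(\pi-\delta)=-\cos\delta$ as $|m|$ approaches its bound. The sign of $\cos\delta$, equivalently whether $c$ is smaller than, equal to, or larger than $1/2$, then dictates three regimes: for $c<1/2$ the perimeter is largest at $m=0$, the symmetric interval $(-b,b)$, and smallest in the half‑line limit; for $c>1/2$ the roles reverse, the symmetric interval minimizing the perimeter; and for $c=1/2$ the perimeter equals $1/\pi$ for every admissible interval. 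This is precisely the announced trichotomy of extremal sets, covering both the maximal and the minimal perimeter at once.

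I expect the main subtlety to lie in the treatment of the boundary of the parameter range rather than in the interior optimization. As $m\to\pm(\pi-\delta)/2$ one endpoint escapes to $\pm\infty$ and the interval degenerates into a half‑line, so in the regimes where the extremum sits at the boundary the extremal value is an infimum realized only in the limit, not attained by any genuine bounded interval; I would state this carefully and identify the half‑line $(-\infty,g)$ determined by the point $g$ of \pref{g} as the limiting minimizer. A secondary point worth verifying is that admitting such half‑lines as limiting convex sets is consistent with the definition of $per$, after which the whole classification follows directly from the monotonicity of $\cos(2m)$ established above.
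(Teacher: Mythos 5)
Your proposal is correct, and it is worth noting that the paper itself contains no proof of this theorem at all: the trichotomy is quoted from the authors' earlier work \cite{BZ}, so your argument is a genuinely independent, self-contained verification rather than a reproduction of the paper's route. Your key move --- substituting $\alpha=\arctan a$, $\beta=\arctan b$, under which the constraint $\mu(a,b)=c$ linearizes to $\beta-\alpha=\pi c=\delta$ and the perimeter $f(a)+f(b)=\tfrac1\pi(\cos^{2}\alpha+\cos^{2}\beta)$ collapses via $\cos^{2}\alpha+\cos^{2}\beta=1+\cos(\alpha+\beta)\cos(\alpha-\beta)$ to $\tfrac1\pi\bigl(1+\cos(2m)\cos\delta\bigr)$ --- is sound, and the monotonicity of $\cos(2m)$ on $|2m|<\pi-\delta\le\pi$ does give the full classification with the sign of $\cos(\pi c)$ deciding the regime, including the degenerate case $c=1/2$ where the perimeter is identically $1/\pi$. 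The one step you flagged but deferred should indeed be nailed down, and it is a one-liner: since $(-\infty,g)^{h}=(-\infty,g+h)$, one has $per(-\infty,g)=f(g)$, and with $\arctan g=\pi c-\pi/2$ this equals $\sin^{2}(\pi c)/\pi$, which is exactly the boundary limit $\tfrac1\pi\bigl(1-\cos^{2}\delta\bigr)$ of your one-parameter family; likewise $per(-g^{*},g^{*})=\tfrac1\pi\bigl(1+\cos\delta\bigr)$ is the value at $m=0$. With these identifications your strict unimodality yields the stated strict inequalities for every bounded interval other than the symmetric one (the theorem's strict inequalities implicitly exclude $(a,b)$ being itself an extremal set). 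As for what each approach buys: your trigonometric reduction is elementary and makes the trichotomy completely transparent, but it is tied to the fact that the standard Cauchy distribution function is an arctangent; the route through \cite{BZ} (in particular its Lemma 5.2 comparing perimeters of intervals and half-lines, which the present paper reuses in Section 3) is heavier but extends to the Cauchy-type section densities $f_{\alpha,n}$, where no such closed-form substitution is available.
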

\begin{itemize}
\item{If  $\mu(a,b)>1/2$ then 
\begin{equation*}
per(-g^{*},g^{*})<per (a,b)< per (-\infty,g)\,.
\end{equation*}
 }
\item{If $\mu(a,b)<1/2$ then
\begin{equation*}
per (-\infty,g)<per (a,b)< per(-g^{*},g^{*}).
\end{equation*}
 }
\item{If $\mu(a,b)=1/2$ (and then $-a=1/b>0$) then
\begin{equation*}
per (-\infty,0)=per (-1/b,b)= per(-1,1)=1/\pi\,.
\end{equation*}
}
\end{itemize}

\subsection{Borell-type inequality}
\begin{thm} [{Borell-type inequality}] For every $a<b$ and every $r>0$ the following holds:
\begin{equation}
\label{adreg1}
g(a-r,b+r)-g(a,b)\geq r/2\,.
\end{equation}
When $\mu(a,b)<1/2$ then
\begin{equation}
\label{adreg2}
g(a-r,b+r)-g(a,b)\geq r
\end{equation}
for all $r>0$ which are small enough. In particular, for $r\leq 2/\sqrt{3}$ the inequality  holds whenever
$\mu(a,b)<1/3$.
\end{thm}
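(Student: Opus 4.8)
The plan is to reduce everything to the explicit formula for $g$ from the Lemma, namely $g(a,b) = -(1+ab)/(b-a)$, and then to verify the three claims by elementary algebra. First I would substitute the shifted endpoints, writing $g(a-r,b+r) = -\bigl(1+(a-r)(b+r)\bigr)/\bigl((b+r)-(a-r)\bigr)$. To keep the bookkeeping manageable I abbreviate $w = b-a > 0$ and $p = 1+ab$, so that $g(a,b) = -p/w$, and after putting the difference over the common denominator $w(w+2r)$ I obtain the closed form
\begin{equation*}
g(a-r,b+r) - g(a,b) = \frac{r\,(w^2 + rw + 2p)}{w\,(w+2r)} = \frac{r\,\bigl(a^2+b^2+2 + r(b-a)\bigr)}{(b-a)(b-a+2r)},
\end{equation*}
where I have used $w^2 + 2p = a^2+b^2+2$. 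This single identity is the engine for all three parts.

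For the first inequality I divide by $r>0$ and clear the positive denominator $w(w+2r)$; the claim $\ge r/2$ then collapses to $w^2 + 4p \ge 0$, and since $w^2 + 4p = (a+b)^2 + 4$ this holds unconditionally, with room to spare. For the second inequality the same clearing of denominators reduces $\ge r$ to the linear condition $2p \ge rw$, i.e. $r \le 2(1+ab)/(b-a)$. Here I would observe that the hypothesis $\mu(a,b) < 1/2$ is exactly the statement $g(a,b) < 0$ (the Cauchy median being $0$), which by $g = -p/w$ and $w>0$ forces $p = 1+ab > 0$; hence the threshold $2p/w$ is strictly positive and the inequality holds for all sufficiently small $r$.

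Finally, for the ``in particular'' clause I would convert the measure condition through $\mu(a,b) = \mu(-\infty,g) = \tfrac{1}{2} + \tfrac{1}{\pi}\arctan g$: requiring this to be $<1/3$ gives $\arctan g < -\pi/6$, i.e. $g < -1/\sqrt{3}$. Then $2p/w = -2g > 2/\sqrt{3}$, so every $r \le 2/\sqrt{3}$ automatically satisfies the threshold $r \le 2p/w$ from the previous step, which proves the claim.

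I do not expect a genuine obstacle in this argument, since the whole matter is driven by the explicit formula for $g$. The only points requiring care are the algebraic simplification leading to the closed form above (in particular the cancellation of the $\pm pw$ and $\pm 2rw$ terms) and the correct translation of each measure condition, $\mu(a,b)<1/2$ and $\mu(a,b)<1/3$, into the corresponding sign and size condition on $g$, equivalently on the ratio $p/w$.
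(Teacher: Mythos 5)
Your proposal is correct and takes essentially the same route as the paper: both arguments work directly from the closed formula $g(a,b)=-(1+ab)/(b-a)$, reduce the bound $\ge r/2$ to the nonnegativity of $(a+b)^2$ (your $w^2+4p=(a+b)^2+4$ is the same cancellation), reduce the bound $\ge r$ to the threshold $r\le 2(1+ab)/(b-a)=-2\,g(a,b)$, and translate $\mu(a,b)<1/3$ into $g(a,b)<-1/\sqrt{3}$. Your $w,p$ bookkeeping is merely a tidier presentation of the identical algebra, so there is nothing to flag.
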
 
\begin{proof}
Taking into account the formula \pref{g}   we obtain
\begin{eqnarray*}
&{}&-g(a-r,b+r) + g(a,b) = \frac{1+(a-r)(b+r)}{(b+r)-(a-r)} - \frac{1+a\,b}{b-a}\\
&{=}& -r\,\frac{(b+r)\/b + (a-r)\/a +2}{((b+r)-(a-r))\/(b-a)}\,.
\end{eqnarray*}
After multiplying by $(-1)$ and dividing by $r$ we obtain
\begin{equation*}
 \frac{(b+r)\/b + (a-r)\/a +2}{((b+r)-(a-r))\/(b-a)}\geq \frac{(b+r)\/b + (a-r)\/a }{((b+r)-(a-r))\/(b-a)}\geq \frac{1}{2}.
\end{equation*}
Indeed, we have
\begin{eqnarray*}
&{}& 2\/(b+r)\/b + 2\/(a-r)\/a - ((b+r)-(a-r))\/(b-a) \\
&{=}&2\/(b+r)\/b - (b+r)\/(b-a) + 2\/(a-r)\/a  + (a-r)\/(b-a)\\
&{=}& (b+r) [2\/b - b + a] + (a-r) [2\/a + b - a] = (b+a)^2\geq 0\,.
\end{eqnarray*}
To justify \pref{adreg2} we have to solve the inequality
\begin{equation*}
 \frac{(b+r)\/b + (a-r)\/a +2}{((b+r)-(a-r))\/(b-a)}\geq  1
\end{equation*}
and this is equivalent to the inequality
\begin{equation*}
2\, \frac{1+ a\/b}{b-a}\geq  r
\end{equation*}
or, equivalently, to
\begin{equation*}
g(a,b)\leq -\,  r/2
\end{equation*}
which justifies the first statement of  \pref{adreg2}. For the last part observe that if $r\leq 2/\sqrt{3}$
then $g(a,b) \leq -\, 1/\sqrt{3}$ implies $g(a,b) \leq -\, r/2$ which yields the inequality \pref{adreg2}. 
Inequality $g(a,b) \leq -\, 1/\sqrt{3}$ is, in turn, equivalent to the inequality 
\begin{equation*}
\mu(a,b) \leq \int_{-\infty}^{-1/\sqrt{3}} \frac{d\/t}{1+t^2}= \frac{1}{\pi}(\arctan \frac{-1}{\sqrt{3}} + \frac{\pi}{2}) = \frac{1}{3}\,.
\end{equation*}
\end{proof}
\subsection{Landau-Shepp-type inequality}
\begin{thm}
[{Landau-Shepp-type inequality}]
For every $a<b$ and every $r>0$ the following holds:
\begin{equation*}
g(r\/a,r\/b)\,\geq \, r\,g(a,b) \quad {\text{if and only if}} \quad r\geq 1.
\end{equation*}
\end{thm}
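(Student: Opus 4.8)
The plan is to rely entirely on the closed-form expression for $g$ established in the Lemma, namely $g(a,b) = -(1+ab)/(b-a)$, and to reduce the claimed equivalence to a single elementary sign computation. Since everything here is explicit, no symmetrization or analytic machinery is needed; the entire statement should collapse to a transparent algebraic identity, and the ``if and only if'' should fall out automatically.

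First I would substitute $ra$ and $rb$ into the formula, using $rb - ra = r(b-a)$ in the denominator, to obtain
\[
g(ra,rb) = -\,\frac{1 + r^2 ab}{r(b-a)}.
\]
Next I would form the difference $g(ra,rb) - r\,g(a,b)$ and place it over the common denominator $r(b-a)$. The numerator becomes $-(1 + r^2 ab) + r^2(1 + ab)$, in which the mixed terms $\mp\, r^2 ab$ cancel, leaving simply $r^2 - 1$. Thus
\[
g(ra,rb) - r\,g(a,b) = \frac{r^2 - 1}{r(b-a)}.
\]

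The final step is a sign analysis. Because $a < b$ forces $b - a > 0$ and because $r > 0$, the denominator $r(b-a)$ is strictly positive. Hence the sign of the difference is exactly the sign of $r^2 - 1$, and the inequality $g(ra,rb) \geq r\,g(a,b)$ holds if and only if $r^2 \geq 1$, i.e.\ if and only if $r \geq 1$, the root $r = -1$ being excluded by $r > 0$. I would emphasize that this equivalence is genuinely pointwise in $(a,b)$: the comparison does not depend on the particular interval, only on whether $r \geq 1$, which is precisely what makes the biconditional clean rather than merely sufficient.

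I do not anticipate a real obstacle here. The only points requiring the slightest care are keeping track of the positivity of $r(b-a)$ so that the direction of the inequality is preserved when clearing denominators, and noting that the $ab$-terms cancel exactly, so that the residual expression $r^2-1$ is independent of $a$ and $b$. Everything else is routine algebra driven by the Lemma.
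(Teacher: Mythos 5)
Your proposal is correct and matches the paper's own proof essentially verbatim: both substitute $ra,rb$ into the closed-form expression $g(a,b)=-(1+ab)/(b-a)$, observe the cancellation of the $r^2ab$ terms so that $g(ra,rb)-r\,g(a,b)=(r^2-1)/(r(b-a))$, and conclude by the sign of $r^2-1$ given $r(b-a)>0$. Your explicit remark on the positivity of the denominator and the exclusion of $r=-1$ is a small but welcome clarification of what the paper leaves implicit.
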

\begin{proof}
Straightforward computation:
\begin{eqnarray*}
&{}&g(r\/a,r\/b)- r\,g(a,b) = -\, \frac{1+r^2 a\/b}{r\/(b-a)} + r\, \frac{1+ a\/b}{b-a}\\
&=&\frac{-1-r^2 a\/b+ r^2 + r^2\/a\/b}{r\/(b-a)} =\frac{ r^2 - 1}{r\/(b-a)}.
\end{eqnarray*}
\end{proof}
\subsection{Concavity of $g(a,b)$}
\begin{thm}
 Function $g(a,b)$ is concave as a function of variables  $a$, $b$, $a<b$
\end{thm}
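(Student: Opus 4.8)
The plan is to establish concavity by the standard second-order criterion. The domain $\{(a,b):a<b\}$ is a convex (half-plane) subset of $\R^2$ and $g$ is smooth there, so it suffices to show that the Hessian of $g$ is negative semidefinite at every point. First I would take the closed form $g(a,b)=-(1+ab)/(b-a)$ from the Lemma and compute the two first-order partials by the quotient rule; the expected outcome is
$$
g_a=-\frac{1+b^2}{(b-a)^2},\qquad g_b=\frac{1+a^2}{(b-a)^2}.
$$

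Next I would differentiate once more to assemble the three Hessian entries, which should come out as
$$
g_{aa}=-\frac{2(1+b^2)}{(b-a)^3},\qquad g_{bb}=-\frac{2(1+a^2)}{(b-a)^3},\qquad g_{ab}=\frac{2(1+ab)}{(b-a)^3}.
$$
Since $b>a$ on the domain, the factor $(b-a)^3$ is positive, so both diagonal entries $g_{aa}$ and $g_{bb}$ are strictly negative; this disposes of the first requirement of the $2\times 2$ negative-definiteness test.

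The crux is the sign of the Hessian determinant. Pulling out the common denominator gives
$$
g_{aa}g_{bb}-g_{ab}^2=\frac{4}{(b-a)^6}\Big[(1+a^2)(1+b^2)-(1+ab)^2\Big],
$$
so the whole question reduces to the algebraic identity $(1+a^2)(1+b^2)-(1+ab)^2=(a-b)^2$, which I would verify by expanding both products. This makes the determinant equal to $4/(b-a)^4>0$. Combined with $g_{aa}<0$, the standard criterion shows the Hessian is negative definite throughout $\{a<b\}$, hence $g$ is (in fact strictly) concave. I expect no genuine obstacle here: the only delicate points are the sign bookkeeping in the second derivatives and spotting the clean factorization into $(a-b)^2$, after which the conclusion is immediate.
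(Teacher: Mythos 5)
Your proposal is correct and follows essentially the same route as the paper: compute the Hessian of the explicit formula $g(a,b)=-(1+ab)/(b-a)$, note $g_{aa},g_{bb}<0$, and reduce the determinant to the identity $(1+a^2)(1+b^2)-(1+ab)^2=(a-b)^2$. Your constant $4$ in the determinant is in fact the correct one (the paper's displayed computation drops it), but this is immaterial to the sign argument.
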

\begin{proof}
The explicit formulas for second derivatives:
\begin{eqnarray*}
&{}&\frac{\partial^2 g}{\partial a^2 }= -2\,\frac{1+b^2}{(b-a)^3}\,, \quad \frac{\partial^2 g}{\partial b^2 }= -2\,\frac{1+a^2}{(b-a)^3}\,,\\
&{}&\frac{\partial^2 g}{\partial a \partial b }= 2\,\frac{1+a\/b}{(b-a)^3}\,.
\end{eqnarray*}
Computing the determinant of the Hessian  we obtain
\begin{eqnarray*}
&{}& \mbox{det} \,Hess(g)(a,b) = (b-a)^{-6}\,[(1+a^2)(1+b^2) - (1+a\/b)^2]\\
 &=& (b-a)^{-6}\,(a-b)^2\geq 0\,,
\end{eqnarray*}
which, together with $\frac{\partial^2 g}{\partial a^2 }<0$, show that the Hessian is negative-definite.
\end{proof}
\section{One-dimensional sections of multidimensional Cauchy measures} 
We start with an important property of a standard one-dimensional Cauchy measure.
\subsection{Concavity of the function $g(a,b)$}
For a probability density function $f$ we define $g(a,b)$ as a function of intervals $(a,b)$, $-\infty\leq a <b<\infty$ by the following formula
\begin{equation}
\label{g_0}
\int_a^b f(t)\,dt = \int_{-\infty}^{g(a,b)} f(t)\,dt\,.
\end{equation}
We further assume that the function $f$ is differentiable and denote  for simplicity
 \begin{equation}
\label{kappa}
\chi (x) = (1/f(x)) ' \,.
\end{equation}
We have the following
\begin{lem}
Assume that the probability density $f$ is differentiable, decreasing on $(0,\infty)$ and $f(-x)= f(x)$. We also assume that $1/f$ is convex and
denote by $\chi (x) = (1/f(x)) '$ its derivative.  Then the function $g(a,b)$ is concave (as a function of $a, b$, for $a<b$) if and only if the following inequality holds
\begin{equation}
\label{kappaconv}
\frac{\chi(a)\,\chi(b)}{\chi(a) - \chi(b)} \geq \chi(g(a,b))\,.
\end{equation}
\end{lem}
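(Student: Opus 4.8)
The plan is to compute the Hessian of $g$ by implicitly differentiating the defining relation \pref{g_0} and then to decide exactly when this Hessian is negative semidefinite; since the domain $\{(a,b):a<b\}$ is convex, this is equivalent to concavity of $g$.

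First I would differentiate \pref{g_0} once in each variable. Writing the right-hand side as $F(g(a,b))$ with $F'=f$ gives at once
$$\frac{\partial g}{\partial a}=-\frac{f(a)}{f(g)},\qquad \frac{\partial g}{\partial b}=\frac{f(b)}{f(g)}.$$
Differentiating a second time and using $f'=-\chi f^2$ (the definition \pref{kappa} rewritten) to eliminate every occurrence of $f'$, I expect the three second-order partials to collapse to
$$g_{aa}=\frac{f(a)^2}{f(g)}\bigl(\chi(a)+\chi(g)\bigr),\quad g_{bb}=\frac{f(b)^2}{f(g)}\bigl(\chi(g)-\chi(b)\bigr),\quad g_{ab}=-\frac{f(a)f(b)\chi(g)}{f(g)}.$$
This substitution is the step that turns the quotient-rule expressions into formulas depending only on $\chi$, and it is where most of the bookkeeping lives.

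Next I would record one monotonicity fact that streamlines everything: $g<b$ always. Indeed \pref{g_0} gives $\int_{-\infty}^{b}f-\int_{-\infty}^{g}f=\int_{-\infty}^{a}f>0$, so $g<b$ by strict monotonicity of the distribution function. Convexity of $1/f$ makes $\chi$ increasing, hence $\chi(g)<\chi(b)$ and $g_{bb}<0$ unconditionally. This is the observation that lets me avoid a separate discussion of $g_{aa}$: for a symmetric $2\times2$ matrix with one strictly negative diagonal entry, negative semidefiniteness is equivalent to nonnegativity of the determinant.

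It then remains to analyze the determinant. Multiplying out and cancelling the $\chi(g)^2$ terms, I expect
$$\det\mathrm{Hess}(g)=\frac{f(a)^2f(b)^2}{f(g)^2}\Bigl[\chi(g)\bigl(\chi(a)-\chi(b)\bigr)-\chi(a)\chi(b)\Bigr].$$
The prefactor is positive, so the determinant is nonnegative if and only if $\chi(g)(\chi(a)-\chi(b))\geq\chi(a)\chi(b)$. Because $a<b$ and $\chi$ is increasing, $\chi(a)-\chi(b)<0$; dividing by this negative quantity reverses the inequality and yields precisely \pref{kappaconv}. Combining this equivalence with $g_{bb}<0$ gives both implications of the biconditional. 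The main obstacle is computational rather than conceptual: carrying out the second differentiation and the determinant cancellation without sign errors, and remembering that the inequality flips when dividing by $\chi(a)-\chi(b)$. The one genuinely load-bearing qualitative fact is $g_{bb}<0$, established from $g<b$ together with monotonicity of $\chi$.
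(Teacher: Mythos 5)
Your proof is correct and takes essentially the same route as the paper's: implicit differentiation of \pref{g_0} for the first and second partials, the observation that $g(a,b)<b$ combined with monotonicity of $\chi$ forces $\frac{\partial^2 g}{\partial b^2}<0$, and reduction of negative semidefiniteness to the sign of the determinant, which upon dividing by the negative quantity $\chi(a)-\chi(b)$ is exactly \pref{kappaconv}. Your early substitution $f'=-\chi f^2$ merely streamlines the paper's bookkeeping --- your predicted formulas for $g_{aa}$, $g_{bb}$, $g_{ab}$ and for the determinant agree with the paper's expressions after that substitution --- and your explicit two-by-two semidefiniteness criterion makes the biconditional slightly cleaner than the paper's phrasing.
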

\begin{proof}
Differentiating the defining equality \pref{g_0}
\begin{equation*}
\int_a^b f(t)\,dt = \int_{-\infty}^{g(a,b)} f(t)\,dt\,.
\end{equation*}
we obtain
\begin{eqnarray*}
&{}&\frac{\partial g}{\partial a}= \frac{-\,f(a)}{f(g(a,b))}\,, \quad \frac{\partial g}{\partial b}= \frac{f(b)}{f(g(a,b))}\,,\\
&{}& f(g(a,b)) \frac{\partial^2 g}{\partial a^2}= -\,f'(a) - f'(g(a,b)) \, \frac{f^2 (a)}{f^2 (g(a,b))}\,,\\
&{}& f(g(a,b)) \frac{\partial^2 g}{\partial b^2}= f'(b) - f'(g(a,b)) \, \frac{f^2 (b)}{f^2 (g(a,b))}\,,\\
&{}& f(g(a,b)) \frac{\partial^2 g}{\partial a \partial b}= f'(g(a,b)) \, \frac{f(a)\,f(b)}{f^2 (g(a,b))}\,.
\end{eqnarray*}
We check that the Hessian of the function $g$ is negative definite. 
For $a<b$ we obtain $g(a,b)<b$. The convexity of $1/f$ implies that $-f'(x)/f^2(x)$ is increasing so that
\begin{equation*}
-\, \frac{f'(b)}{f^2 (b)}> -\,\frac{f'(g(a,b))}{f^2 (g(a,b))}\quad {\text{hence}} \quad \frac{\partial^2 g}{\partial b^2}<0\,.
\end{equation*}
Moreover, 
\begin{eqnarray*}
&{}&  f^2(g(a,b))\, \mbox{det} \,Hess(g)(a,b) = \frac{ f'(g(a,b))}{f^2 (g(a,b))}\[f'(a) \, f^2 (b) - f'(b) \, f^2 (a)\] -f'(a)\,f'(b)\,,
\end{eqnarray*}
and the condition for non-negativity of the above expression is equivalent to 
\begin{equation}
\label{conv0}
  \frac{ f'(g(a,b))}{f^2 (g(a,b))}\[\frac{f'(a)}{ f^2 (a)} - \frac{f'(b) }{f^2 (b)}\] \geq 
\frac{f'(a)}{ f^2 (a)}\,\frac{f'(b) }{f^2 (b)}\,.
\end{equation}
Taking into account the definition of the function $\chi$ we rewrite the above inequality as follows:
\begin{equation*}
\chi(g(a,b))\,(\chi(a) - \chi(b)) \geq \chi(a)\,\chi(b)\,.
\end{equation*}
By the requirement that $1/f(x)$ is convex we obtain that $\chi(x) = -f'(x)/f^2(x)$ is increasing so the expression within the bracket
on the left-hand side of the above inequality is negative. Dividing by this expression, we obtain \pref{kappaconv}.
Observe that by the definition we have $\chi(x) = -\, \frac{f'(x)}{f^2(x)}$ and $\chi(-x) = -\,\chi(x)$, $\chi(x) \geq 0$ if $x>0$. 
Therefore, if $a<0<b$ and $g(a,b)<0$ then the left-hand side of \pref{kappaconv} is positive while the right-hand side is negative and the inequality holds automatically. In all the remaining cases we have $\chi(g(a,b))\,\chi(a)\,\chi(b)\leq 0$ and 
\begin{equation*}
\frac{\chi(a) - \chi(b)}{\chi(g(a,b))\,\chi(a)\,\chi(b)}\geq 0\,.
\end{equation*}
Multiplying both sides of \pref{kappaconv}
by this expression we obtain
\begin{equation}
\label{conv1}
\frac{1}{\chi(b)} - \frac{1}{\chi(b)} \leq \frac{1}{\chi(g(a,b))}\,,
\end{equation}
with the exception for the case when $a<0<b$ and simultaneously $g(a,b)<0$. 
\end{proof}
Now we prove analogous property for one-dimensional sections of multidimensional isotropic Cauchy measure.
\begin{thm}
Suppose that $\nu_{\alpha,n}$, $\alpha\geq 0$,  is a probability measure with the density $f_{\alpha,n}$:
\begin{equation}
\label{marginal}
f_{\alpha,n}(x)=\frac{c_n}{(1+\alpha^2+x^2)^{(n+1)/2}}.
\end{equation}
Then the function $g(a,b):=g_{\alpha}(a,b)$ defined by \pref{g_0} is a concave function of two variables $a,\ b$, for $a<b$.
\end{thm}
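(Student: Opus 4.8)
The plan is to derive the statement from the preceding Lemma, whose inequality \pref{kappaconv} characterizes concavity of $g$ for every even, differentiable density that decreases on $(0,\infty)$ and has convex reciprocal. Writing $\beta=\sqrt{1+\alpha^2}$ and substituting $t=\beta s$ in \pref{g_0}, one checks that $g_\alpha(a,b)=\beta\,g_0(a/\beta,b/\beta)$, where $g_0$ is the function attached to the case $\alpha=0$. Since $(a,b)\mapsto(a/\beta,b/\beta)$ is linear and $\beta>0$, concavity of $g_0$ implies concavity of $g_\alpha$, so it is enough to treat $\alpha=0$, i.e. $f(x)=c_n(1+x^2)^{-(n+1)/2}$. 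For this $f$ the hypotheses of the Lemma hold: $f$ is even, $f'(x)<0$ for $x>0$, and from \pref{kappa} one finds $\chi(x)=\frac{n+1}{c_n}\,x(1+x^2)^{(n-1)/2}$ with $\chi'(x)=\frac{n+1}{c_n}(1+x^2)^{(n-3)/2}(1+nx^2)>0$, so $1/f$ is convex. Thus concavity of $g$ reduces to verifying \pref{kappaconv}.

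To verify \pref{kappaconv} I would keep the case split from the Lemma's proof. When $a<0<b$ and $g(a,b)<0$ the inequality holds automatically. In each remaining case \pref{kappaconv} is equivalent to its reciprocal form \pref{conv1}, which reads $\theta(b)-\theta(a)\le\theta(g)$ with $\theta(x):=1/\chi(x)=\frac{c_n}{n+1}\,(1+x^2)^{(1-n)/2}/x$, an odd function. Fixing $a$ and setting $D(b):=\theta(g(a,b))-\theta(b)+\theta(a)$, the goal becomes $D(b)\ge 0$ throughout the admissible range of $b$.

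I expect the main obstacle to be that, unlike the standard Cauchy case, $g(a,b)$ has no closed form for $n>1$, so \pref{kappaconv} cannot simply be substituted into and reduced to an algebraic identity. I would get around this by differentiating in $b$. From \pref{g_0} one has $\partial g/\partial b=f(b)/f(g)$, and writing $R:=\theta'/f$ a short computation gives $D'(b)=f(b)\bigl(R(g)-R(b)\bigr)$. The decisive feature of these densities is that $R(x)=-\frac{n}{n+1}-\frac{1}{(n+1)x^2}$ depends on $x$ only through $1/x^2$; hence the additive constant cancels and $R(g)-R(b)=\frac{1}{n+1}\bigl(1/b^2-1/g^2\bigr)$ has the sign of $g^2-b^2$. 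Thus the sign of $D'(b)$ is decided purely by comparing $|g|$ with $b$, which can be tracked through the defining relation without ever solving for $g$.

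To finish, note that $\partial g/\partial b>0$, so $g$ increases with $b$; combined with the evenness of $f$ this makes $|g|-b$ change sign at most once over the range of $b$, so $D$ is first increasing then decreasing. It then remains to evaluate the endpoints. For $0<a<b$ (where $g<0$) both limits $b\to a^+$, in which the mass tends to $0$ and $g\to-\infty$, and $b\to\infty$, in which $g\to -a$, give $D\to 0$; unimodality then yields $D\ge0$ (the case $a=0$ is trivial since $\theta(0^+)=+\infty$). For $a<0<b$ with $g\ge0$ one has $g<b$ throughout, so $D'<0$ and $D$ decreases from $+\infty$ (at $g=0$) to $0$ (as $b\to\infty$), again giving $D\ge0$. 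The case $a<b\le0$ follows from the previous one by the reflection $x\mapsto-x$, under which $g(a,b)$ is invariant while $\theta$ is odd. This confirms \pref{kappaconv} in all cases and hence the asserted concavity.
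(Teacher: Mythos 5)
Your proposal is correct, and while it shares the paper's overall frame --- invoke the preceding Lemma and verify the criterion \pref{kappaconv} --- your verification of that criterion takes a genuinely different route. The paper checks the degenerate limits $a=-\infty$, $b=\infty$, $a=b$, then runs a Lagrange-multiplier argument on $F(a,b)=1/\chi(b)-1/\chi(a)$ under the constraint $g_{\alpha}(a,b)=t$: the critical-point equation there is $(1/\chi(a))'/f_{\alpha,n}(a)=(1/\chi(b))'/f_{\alpha,n}(b)$, i.e.\ $R(a)=R(b)$ for exactly the function $R=\theta'/f$ you compute, and injectivity of $R$ on $(0,\infty)$ together with evenness forces $a=\pm b$; the problem is thus reduced to the symmetric inequality $2\,\chi(h(p))\leq \chi(p)$ of \pref{kappaconv2}, which the paper proves by introducing an auxiliary $x(p)$ with $\chi(x(p))=\chi(p)/2$ and comparing $H(h(p))$ with $H(x(p))$ via their derivatives. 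You instead scale out $\alpha$ through $g_{\alpha}(a,b)=\sqrt{1+\alpha^2}\,g_0\bigl(a/\sqrt{1+\alpha^2},\,b/\sqrt{1+\alpha^2}\bigr)$ --- this is the identity \pref{scaling}, which the paper records only in the following subsection, its proof of this theorem keeping $\alpha$ and normalizing only the multiplicative constant --- then fix $a$ and sweep $b$, reading the sign of $D'(b)=f(b)\bigl(R(g)-R(b)\bigr)$ off the explicit formula $R(x)=-\tfrac{n}{n+1}-\tfrac{1}{(n+1)x^2}$; this exploits the same structural miracle (that $R$ depends on $x$ only through $1/x^2$) more sharply than the paper, which uses it only as injectivity. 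I verified your computations of $\chi$, $\chi'$, $R$ and $D'$, the strict monotonicity of $-g-b$ that makes $D$ unimodal, the endpoint limits $g\to-\infty$ as $b\to a^{+}$ and $g(a,\infty)=-a$, and the reflection and automatic cases; all are sound, and you also silently (and correctly) repair the typo in \pref{conv1}, which should read $1/\chi(b)-1/\chi(a)\leq 1/\chi(g(a,b))$. As to what each approach buys: yours is fully explicit about boundary behavior and avoids the attainment question implicit in the Lagrange step, which the paper addresses only by checking equality in the limiting configurations; the paper's reduction to $a=-b$, on the other hand, produces the quantitative symmetric inequality \pref{kappaconv2} and the $h(p)$-versus-auxiliary-function comparison technique, both of which are recycled in its subsequent proofs of the Borell-type and Landau--Shepp-type inequalities, and its argument treats all $\alpha\geq 0$ simultaneously without the scaling reduction.
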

\begin{proof}
We check that the inequality \pref{kappaconv} holds. We rewrite it in the equivalent form
\begin{equation}
\label{kappaconv1}
\( \frac{1}{\chi(b)} - \frac{1}{\chi(b)} \)^{-1} \geq \chi(g(a,b)\,.
\end{equation}
We first check that the assumptions of the previous lemma are satisfied. We obtain  
\begin{equation*}
\chi(x) = c\,x\,(1+\alpha^2 + x^2)^{\frac{n-1}{2}}\,
\end{equation*}
and  it is clear that all the assumptions are satisfied. We note that  $\lim_{x\to \infty} \chi(x) = \infty$.  \\
First, let us observe that $\lim_{a\to -\,\infty} g(a,b) = g(-\infty,b)=b$ and, at the same time, 
$\lim_{a\to -\, \infty}\chi(a) = -\,\infty$ so we obtain equality in \pref{kappaconv1} for $a= - \, \infty$.  Analogously, 
$\lim_{b\to \infty} g(a,b) = g(a,\infty) = -\,a$. Since $\chi(-a) = -\,\chi(a)$, we also get the equality for $b= \infty$. 
For $a=b$ we have $g(a,a) = -\,\infty$ hence \pref{kappaconv1} obviously holds.
To prove \pref{kappaconv1} in whole generality we use Lagrange method to find extremal values of the function 
\begin{equation*}
F(a,b) + \lambda\, g(a,b) = \frac{1}{\chi(b)} - \frac{1}{\chi(a)} + \lambda \,g(a,b)
\end{equation*}
under the condition $g(a,b)=t$. We obtain
\begin{equation*}
\frac{\partial F(a,b)}{\partial a} + \lambda\, \frac{\partial g(a,b)}{\partial a} = 0\,,
\quad \frac{\partial F(a,b)}{\partial b} + \lambda\, \frac{\partial g(a,b)}{\partial b} = 0\,.
\end{equation*}
Taking into account the form of the first derivatives of $g$ we obtain
\begin{equation*}
\frac{(1/\chi(a))'}{f_{\alpha,n}(a)} = \frac{(1/\chi(b))'}{f_{\alpha,n}(b)} = -\, \frac{\lambda}{f_{\alpha,n}(g(a,b))}\,.
\end{equation*}
By a direct computation we check that the function $\frac{(1/\chi(x))'}{f_{\alpha,n}(x)}$ is injective on $(0,\infty)$. Therefore, extremal values of 
the function $F$ can only be attained at $a=\pm b$. Thus, it is sufficient to check the inequality for $a= -\,b$. \\
Denote $b=-a=p$ and $h(p)=g(-p,p)$. We have to show that for $p>0$ the following holds:
\begin{equation}
\label{kappaconv2}
2\,\chi(h(p)) \leq \chi(p).
\end{equation}
Set $H(z)=\int_{-\infty}^z f_{\alpha,n}(t)\,dt$. By the definition of the value $h(p)$ we obtain
\begin{equation*}
H(h(p)) = \int_{-\infty}^{h(p)} f_{\alpha,n}(t)\,dt = \int_{-p}^p f_{\alpha,n}(t)\,dt = 2\,\int_0^p f_{\alpha,n}(t)\,dt\,.
\end{equation*}
We put $x(p)$ such that
\begin{equation*}
\chi(x(p)) = \frac{1}{2}\chi(p)<\chi(p)\,.
\end{equation*}
We obtain $h(0)=-\,\infty$, $\chi(0)=0$ hence $x(0)=0$. Moreover, $h(p)<p$ and $x(p)<p$. 
We show that the following holds
\begin{equation*}
H(h(p)) - H(x(p))= \int_{-\infty}^{h(p)} f_{\alpha,n}(t)\,dt - \int_{-\infty}^{x(p)} f_{\alpha,n}(t)\,dt\leq 0.
\end{equation*}
The value of the above function at $0$ is $(-1/2)$; at $\infty$ the value is $0$. If we show that the derivative 
is non-negative then this will justify the above statement. \\
Now, since  the inequality \pref{kappaconv2} is invariant with respect to multiplication by non-negative constants we may 
put
\begin{equation*}
\chi(x)=(1/f_{\alpha,n}(x))'= x\,(1+\alpha^2 + x^2)^{\frac{n-1}{2}}
\end{equation*}
By the identity $2\,\chi(x(p))=  \chi(p)$ we obtain
\begin{equation*}
\frac{x(p)}{p} = \frac{1}{2}\( \frac{1+\alpha^2 + p^2}{1+\alpha^2 + x^2(p)} \)^{\frac{n-1}{2}}\,,
\end{equation*}
and
\begin{equation*}
x'(p) = \frac{1}{2}\( \frac{1+\alpha^2 + p^2}{1+\alpha^2 + x^2(p)} \)^{\frac{n-3}{2}}\,
\frac{1+\alpha^2 + n\/p^2}{1+\alpha^2 + n\/x^2(p)}\,. 
\end{equation*}
By the definition of $h(p)$ we obtain
\begin {equation*}
\frac{d}{d\/p} H(h(p)) = \frac{2}{(1+\alpha^2 + p^2)^{\frac{n+1}{2}}}\,.
\end{equation*}
Analogously, taking into account the formula for $x'$ and $x$ we obtain
\begin{eqnarray*}
&{}&\frac{d}{d\/p} H(x(p)) = \frac{x'(p)}{(1+\alpha^2 + x(p)^2)^{\frac{n+1}{2}}}\\
&=& \frac{1}{2}\,\frac{(1+\alpha^2 + p^2)^{\frac{n-3}{2}}}{(1+\alpha^2 + x(p)^2)^{n-1}}\,
\frac{1+\alpha^2 + n\/p^2}{1+\alpha^2 + n\/x^2(p)}\\
&=& 2\,\frac{(1+\alpha^2 + p^2)^{\frac{n-3}{2}}}{(1+\alpha^2 + p^2)^{n-1}}\,\frac{x^2 (p)}{p^2}
\frac{1+\alpha^2 + n\/p^2}{1+\alpha^2 + n\/x^2(p)}\\
&=& 2\,\frac{x^2 (p)}{p^2\,(1+\alpha^2 + p^2)^{\frac{n+1}{2}}}\,
\frac{1+\alpha^2 + n\/p^2}{1+\alpha^2 + n\/x^2(p)}\,.
\end{eqnarray*}
We thus obtain
\begin{eqnarray*}
 \frac{d}{d\/p} \[H(h(p)) -  H(x(p))\] &=& 2\,
\frac{p^2\,(1+\alpha^2 + n\/x^2(p))   -   x^2 (p)\,(1+\alpha^2 + n\/x^2(p))}{p^2\,(1+\alpha^2 + p^2)^{\frac{n+1}{2}}\,(1+\alpha^2 + n\/p^2)}\\
&=& \frac{(1+\alpha^2)\,(p^2 - x^2 (p))}{p^2\,(1+\alpha^2 + p^2)^{\frac{n+1}{2}}\,(1+\alpha^2 + n\/p^2)} >0
\end{eqnarray*}
and since $2\,\chi(x(p))= \chi(p) < 2\,  \chi(p) $ hence $x(p) < p$ . The proof is now complete.
\end{proof}
\subsection{Regularization inequalities}
Now we investigate analogues of Borell and Landau-Shepp inequalities for measures with densities $f_{\alpha,n}$.
\subsubsection{Borell-type inequality}
\begin{thm}
For $a<b$ and every $r>0$ we obtain
\begin{equation}
\label{ad1}
g_{\alpha}(a-r,b+r) - g_{\alpha}(a,b)\,\geq \frac{r}{2^{1/n}}\,,
\end{equation}
where $g_{\alpha}:= g_{\alpha,n}$ is defined by the density $f_{\alpha,n}$.
When $\mu_{\alpha}(a,b)<1/2$ then
\begin{equation}
\label{adreg2}
g_{\alpha}(a-r,b+r)-g(a,b)\geq r
\end{equation}
for all $r>0$ which are small enough.
\end{thm}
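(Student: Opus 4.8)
The plan is to study both inequalities through the single–variable function
\[
\phi(r)=g_{\alpha}(a-r,b+r),\qquad r\ge 0,
\]
and to use that $\phi$ is \emph{concave}: the map $r\mapsto(a-r,b+r)$ is affine and $g_{\alpha}$ is concave by the theorem just proved, so $\phi$ is concave, and hence the difference quotient $r\mapsto\bigl(\phi(r)-\phi(0)\bigr)/r$ is non-increasing. From the first–order formulas $\partial g/\partial a=-f_{\alpha,n}(a)/f_{\alpha,n}(g)$ and $\partial g/\partial b=f_{\alpha,n}(b)/f_{\alpha,n}(g)$ I record $\phi'(r)=\bigl(f_{\alpha,n}(a-r)+f_{\alpha,n}(b+r)\bigr)/f_{\alpha,n}\bigl(\phi(r)\bigr)$. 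For \pref{ad1} I would compute the asymptotic slope of $\phi$: as $r\to\infty$ both endpoints run to $\pm\infty$, so $\phi(r)\to+\infty$, and the tail estimate $\int_{y}^{\infty}f_{\alpha,n}\sim(c_n/n)\,y^{-n}$ applied to the defining identity $\int_{\phi(r)}^{\infty}f_{\alpha,n}=\int_{-\infty}^{a-r}f_{\alpha,n}+\int_{b+r}^{\infty}f_{\alpha,n}$ yields $\phi(r)^{-n}\sim(r-a)^{-n}+(r+b)^{-n}\sim 2\,r^{-n}$, whence $\phi(r)/r\to 2^{-1/n}$. Since the difference quotient is non-increasing and tends to this limit, $\bigl(\phi(r)-\phi(0)\bigr)/r\ge\lim_{s\to\infty}\phi(s)/s=2^{-1/n}$ for every $r>0$, which is exactly \pref{ad1}; the only care needed is to make the tail expansion uniform, which is routine for the explicit density.

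For the refined bound it suffices, again by concavity, to prove the strict slope estimate $\phi'(0)>1$ whenever $\mu_{\alpha}(a,b)<\tfrac12$: then $\bigl(\phi(r)-\phi(0)\bigr)/r\to\phi'(0)>1$ as $r\to0^{+}$, so $\phi(r)-\phi(0)\ge r$ for all small $r$. Writing $g:=g(a,b)$, the condition $\mu_{\alpha}(a,b)<\tfrac12$ is equivalent to $g<0$, and $\phi'(0)>1$ is equivalent to
\[
f_{\alpha,n}(a)+f_{\alpha,n}(b)>f_{\alpha,n}(g).
\]
If $a\ge 0$ (or, symmetrically, $b\le 0$) this is immediate: from $\int_a^b f_{\alpha,n}=\int_{-g}^{\infty}f_{\alpha,n}$ one gets $a<-g$, hence $f_{\alpha,n}(a)>f_{\alpha,n}(-g)=f_{\alpha,n}(g)$.

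The remaining case $a<0<b$ is the heart of the matter, and I would treat it by a Lagrange argument, exactly as in the concavity proof. Fix the level $\mu_{\alpha}(a,b)=V<\tfrac12$ and minimise $f_{\alpha,n}(a)+f_{\alpha,n}(b)$ along this curve. The multiplier condition reduces to $(\log f_{\alpha,n})'(a)=-(\log f_{\alpha,n})'(b)$, that is $(a+b)\bigl(1+\alpha^2+ab\bigr)=0$, so the interior critical points lie either on the diagonal $a=-b$ or on the hyperbola $ab=-(1+\alpha^2)$. An inversion substitution $y\mapsto(1+\alpha^2)/y$ shows that on the hyperbola one always has $\mu_{\alpha}(a,b)\ge\tfrac12$, so this branch never meets a level $V<\tfrac12$; hence the only relevant interior critical configuration is the symmetric one $a=-b$. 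The level curve $\{\mu_{\alpha}=V,\ a\le0\le b\}$ is compact, its endpoints are the already–settled cases $a=0$ and $b=0$ (where $f_{\alpha,n}(0)>f_{\alpha,n}(g)$ is trivial), so the minimum is attained either at an endpoint or at $a=-b$, and it remains only to verify the inequality at the symmetric point.

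The main obstacle is precisely this symmetric case. Setting $p=-a=b$ and $q=-g(-p,p)>0$, the identity $\int_q^{\infty}f_{\alpha,n}=2\int_0^{p}f_{\alpha,n}$ must be shown to force
\[
2\,f_{\alpha,n}(p)>f_{\alpha,n}(q)\qquad\text{for } V<\tfrac12,
\]
with equality only in the limit $V\to\tfrac12$. I expect to establish this in the same way inequality \pref{kappaconv2} was handled above, namely by differentiating both sides in $p$ and checking the sign of the resulting rational expression. The secondary difficulty is conceptual: because $(\log f_{\alpha,n})'$ is not injective on $(0,\infty)$, the Lagrange analysis produces the spurious hyperbola branch, and the inversion substitution is the device designed to discard it; once that branch is excluded and the symmetric estimate is confirmed, $f_{\alpha,n}(a)+f_{\alpha,n}(b)>f_{\alpha,n}(g)$ holds throughout, giving $\phi'(0)>1$ and hence the claimed bound for all small $r$.
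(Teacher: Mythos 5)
Your proof of the first inequality \pref{ad1} is correct but takes a genuinely different route from the paper. The paper proves the differential form $f_{\alpha,n}(a)+f_{\alpha,n}(b)\geq 2^{-1/n}f_{\alpha,n}(g_{\alpha}(a,b))$ by a Lagrange analysis with two critical branches, $a=-b$ and $ab=-(1+\alpha^2)$, each settled by an implicitly defined comparison function ($z(p)$, respectively $y(p)$) and a monotone-integral argument, and only afterwards integrates up using concavity of $\psi_{a,b}(r)=g_{\alpha}(a-r,b+r)$. You use concavity at the outset: the difference quotient $(\phi(r)-\phi(0))/r$ of the concave function $\phi(r)=g_{\alpha}(a-r,b+r)$ is non-increasing, and the tail asymptotics $\int_y^{\infty}f_{\alpha,n}\sim (c_n/n)\,y^{-n}$ applied to $\int_{\phi(r)}^{\infty}f_{\alpha,n}=\int_{-\infty}^{a-r}f_{\alpha,n}+\int_{b+r}^{\infty}f_{\alpha,n}$ give $\phi(r)/r\to 2^{-1/n}$, so the quotient is everywhere at least its limit $2^{-1/n}$. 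This is valid as stated --- since $(a,b)$ is fixed, only a pointwise asymptotic is needed, and your worry about uniformity is unnecessary --- and it is shorter than the paper's two-case analysis, with the bonus of showing the constant $2^{-1/n}$ is asymptotically sharp; what it buys less of is information at finite scales (the paper's differential inequality $\psi'_{a-r,b+r}(0)\geq 2^{-1/n}$ holds at every point, which is also what the paper reuses later, e.g.\ in \pref{positive_h} and in the Landau--Shepp proof).

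For the second inequality there is a genuine gap. Your reduction is sound and parallels the paper: the differential form $f_{\alpha,n}(a)+f_{\alpha,n}(b)>f_{\alpha,n}(g_{\alpha}(a,b))$ for $g_{\alpha}(a,b)<0$, the easy cases $a\geq 0$ or $b\leq 0$, compactness of the level curve in the quadrant $a\leq 0\leq b$, the multiplier condition $(a+b)(1+\alpha^2+ab)=0$, and the exclusion of the hyperbola branch (your inversion substitution does work: with $t=\sqrt{1+\alpha^2}\tan\theta$ the measure of $(-(1+\alpha^2)/b,\,b)$ becomes the integral of $\cos^{n-1}\theta$ over a sliding window of length $\pi/2$, which by unimodality is at least half the total mass; this reproduces Lemma 5.1 of \cite{BZ}, which is what the paper cites). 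But the decisive estimate, $2f_{\alpha,n}(p)>f_{\alpha,n}(h(p))$ whenever $\mu_{\alpha}(-p,p)<1/2$, is exactly where you stop: you only ``expect'' it to follow by the method used for \pref{kappaconv2}. It is not a routine repetition of that computation. There the comparison point $x(p)$ defined by $2\chi(x(p))=\chi(p)$ exists for all $p>0$ and the boundary values at $p=0$ and $p=\infty$ were available; here the relevant range of $p$ is bounded (one needs $h(p)<0$, i.e.\ $p$ below the median radius), the comparison point $x^{*}(p)$ defined by $f_{\alpha,n}(x^{*}(p))=2f_{\alpha,n}(p)$ exists only above a threshold in $p$, and closing the monotonicity argument requires the perimeter comparison at measure exactly $1/2$. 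In the paper this estimate is precisely the content of the ``fundamental Lemma 5.2'' of \cite{BZ}, stating $per(-p,p)\geq per(-\infty,g_{\alpha}(-p,p))$ for $\mu_{\alpha}(-p,p)<1/2$ --- a nontrivial result cited, not re-derived. Your proof is incomplete until you either invoke that lemma or actually carry out the deferred computation. A secondary, fixable point: your continuity argument needs the strict inequality $\phi'(0)>1$, whereas the paper sidesteps strictness by bounding $(\psi(r)-\psi(0))/r\geq\psi'(r)=\psi'_{a-r,b+r}(0)\geq 1$ for all $r$ small enough that $\mu_{\alpha}(a-r,b+r)<1/2$, which holds by continuity since $\mu_{\alpha}(a,b)<1/2$.
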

\begin{proof}
We first prove 
the differential form of the inequalities:
\begin{equation*}
-\,\frac{\partial\/g_{\alpha}}{\partial\/a} + \frac{\partial\/g_{\alpha}}{\partial\/b}
\,\geq \, \frac{1}{2^{1/n}} \quad 
{\text{or, if $\mu_{\alpha}(a,b)<1/2$:}} \quad
-\,\frac{\partial\/g_{\alpha}}{\partial\/a} + \frac{\partial\/g_{\alpha}}{\partial\/b}
\,\geq \, 1\,.
\end{equation*}
By the form of the partial derivatives of $g_{\alpha}$ we obtain the following form of these inequalities:
\begin{eqnarray}
\label{borelldif1}
&{}&f_{\alpha,n}(a) + f_{\alpha,n}(b)\, \geq \,\frac{1}{2^{1/n}}\, f_{\alpha,n}(g_{\alpha}(a,b))\\
&{}&f_{\alpha,n}(a) + f_{\alpha,n}(b)\, \geq \, \, f_{\alpha,n}(g_{\alpha}(a,b))\,.
\end{eqnarray}
Let $G(a,b)= f_{\alpha,n}(a) + f_{\alpha,n}(b)$. We seek extrema under the condition $g_{\alpha}(a,b)=t$; in the second inequality we assume that $g_{\alpha}(a,b)=t<0$. Using Lagrange method we obtain
\begin{eqnarray*}
&{}& \frac{\partial\/G}{\partial\/a} + \lambda\,\frac{\partial\/g_{\alpha}}{\partial\/a} =0\,, \quad
  \frac{\partial\/G}{\partial\/b} + \lambda\,\frac{\partial\/g_{\alpha}}{\partial\/b} = 0 \quad{\text{or equivalently}}\\
&{}&  f'(a) -\, \lambda \, \frac{f(a)}{f(g_{\alpha}(a,b))}	= 0\,,\quad f'(b) +\, \lambda \, \frac{f(b)}{f(g_{\alpha}(a,b))}	= 0\,.		
\end{eqnarray*}
We thus obtain
\begin{equation*}
\frac{f'(a)}{f(a)} = \frac{\lambda}{f(g_{\alpha}(a,b))} = -\,\frac{f'(b)}{f(b)}\,.
\end{equation*}
Since $\frac{f'(x)}{f(x)}= \frac{-\,(n+1)\,x}{1+\alpha^2 + x^2}$ we obtain
\begin{equation*}
\frac{a}{1+\alpha^2 + a^2} = \frac{-\,b}{1+\alpha^2 + b^2} \quad {\text{equivalently}} \quad
(a+b)\,(1+\alpha^2 + a\/b) =0\,.
\end{equation*}
Now, we proof the first part of the theorem.\\
{\bf{1.}} The case $-a=b=p>0$, $h(p) = g(-p,p)$. Our first inequality reduces to 
\begin{equation*}
h'(p) = 2\,\(\frac{1+\alpha^2+ h^2(p)}{1+\alpha^2 + p^2} \)^{\frac{n+1}{2}}\geq \frac{1}{2^{1/n}}
\end{equation*}
or, equivalently
\begin{equation}
\label{borelldif2}
\frac{1+\alpha^2+ h^2(p)}{1+\alpha^2 + p^2} \geq \frac{1}{2^{2/n}}\,.
\end{equation}
Define $p_1:=p_1(\alpha)$ by the formula
\begin{equation*}
\frac{1+\alpha^2}{1+\alpha^2 + p_1^2} = \frac{1}{2^{2/n}}\,
\end{equation*}
or, more explicitly $ p_1^2 = (1+\alpha^2)\/(2^{2/n} -1)$. Note that for $0<p<p_1$ we obtain
\begin{equation*}
 2\,\(\frac{1+\alpha^2+ h^2(p)}{1+\alpha^2 + p^2} \)^{\frac{n+1}{2}}\,>\,
2\,\(\frac{1+\alpha^2}{1+\alpha^2 + p_1^2} \)^{\frac{n+1}{2}}\,=
\,\frac{1}{2^{1/n}}\,.
\end{equation*}
We thus assume that $p\geq p_1$. Define $z:=z(p)\geq 0$ such that
\begin{equation*}
 2\,\(\frac{1+\alpha^2+ z^2(p)}{1+\alpha^2 + p^2} \)^{\frac{n+1}{2}} =
 \frac{1}{2^{1/n}}\,.
\end{equation*}
It is enough to show that $z(p) \leq h(p)$. We obtain
\begin{equation*}
h'(p) = 2\,\(\frac{1+\alpha^2+ h^2(p)}{1+\alpha^2 + p^2} \)^{\frac{n+1}{2}}
\end{equation*}
so
\begin{equation*}
\frac{h'(p)}{(1+\alpha^2+ h^2(p))^{\frac{n+1}{2}}} = \frac{2}   {(1+\alpha^2 + p^2)^{\frac{n+1}{2}}}\,. 
\end{equation*}
On the other hand, $z'(p) = p/(2^{2/n}\,z(p))$ and, by the definition of $z(p)$ we obtain
\begin{equation*}
\frac{z'(p)}{(1+\alpha^2+ z^2(p))^{\frac{n+1}{2}}} = 
\frac{p}{z(p)}\,\frac{2^{1-\frac{1}{n}}}   {(1+\alpha^2 + p^2)^{\frac{n+1}{2}}}\,. 
\end{equation*}
Therefore, we obtain
\begin{eqnarray*}
&{}&\frac{d}{d\/p}\[ \int_{-\,\infty}^{h(p)} \frac{d\/t}{(1+\alpha^2 + t^2)^{\frac{n+1}{2}}} -\,
 \int_{-\,\infty}^{z(p)} \frac{d\/t}{(1+\alpha^2 + t^2)^{\frac{n+1}{2}}}  \]\\
&=& \frac{h'(p)}{(1+\alpha^2+ h^2(p))^{\frac{n+1}{2}}} - \frac{z'(p)}{(1+\alpha^2+ z^2(p))^{\frac{n+1}{2}}}\\
&=& \frac{2}   {(1+\alpha^2 + p^2)^{\frac{n+1}{2}}} - 
\frac{p}{z(p)}\,\frac{2^{1-\frac{1}{n}}}   {(1+\alpha^2 + p^2)^{\frac{n+1}{2}}}\\
&=& \frac{2}   {(1+\alpha^2 + p^2)^{\frac{n+1}{2}}}\,\( 1 - \frac{1}{2^{1/n}}\,\frac{p}{z(p)}  \)\\
&=& \frac{2}   {(1+\alpha^2 + p^2)^{\frac{n+1}{2}}}\,\frac{2^{2/n}\,z^2(p) - p^2}{2^{1/n}\,z(p)\,(2^{1/n}\,z(p)+p)}<0
\end{eqnarray*}
since $2^{2/n}\,z^2(p) - p^2 = -\,(1+\alpha^2)\,(2^{2/n}-1)<0$. Taking into account that the value of the function under 
differential at $\infty$ is $0$:
\begin{equation*}
 \int_{-\,\infty}^{\infty} \frac{d\/t}{(1+\alpha^2 + t^2)^{\frac{n+1}{2}}} -\,
 \int_{-\,\infty}^{\infty} \frac{d\/t}{(1+\alpha^2 + t^2)^{\frac{n+1}{2}}} =0
\end{equation*}
we obtain that $h(p) \geq z(p)\geq 0$, for $p\geq p_1$, thus ending the proof of the case 1 and showing that
\begin{equation}
\label{ad2}
h'(p) \,\geq \frac{1}{2^{1/n}}\,.
\end{equation}
We note that the above observation also yields 
\begin{equation}
\label{positive_h}
h(p_1)\geq 0 \quad {\text{hence}} \quad \mu_{\alpha}(-p_1,p_1) \geq 1/2\,.
\end{equation}
{\bf{2.}} We now consider the case $a\,b= -\,(1+\alpha^2)$. Put $a=-\,(1+\alpha^2)/b$, $b>0$.
Then the left-hand side of inequality \pref{borelldif1} takes on the following form
\begin{equation*}
f_{\alpha,n}(-\,(1+\alpha^2)/b) + f_{\alpha,n}(b),
\end{equation*}
while the right-hand side is equal to $ f(g_{\alpha,n}((1+\alpha^2)/b,b))$. We multiply both sides of the equation 
\pref{borelldif1} by the constant $(1+\alpha^2)^{(n+1)/2}$ and put $p=b/\sqrt{1+\alpha^2}$, $h(p) = g_{\alpha,n}(-\/1/p,p)$.
Taking into account scaling property of the function $G$, we obtain the following form of our inequality:
\begin{equation}
\label{borelldif3}
(1+p^{n+1})\,\(\frac{1+h^2(p)}{1+p^2}   \)^{\frac{n+1}{2}} \,\geq \, \frac{1}{2^{1/n}}\,.
\end{equation}
Define
\begin{equation*}
\phi(p) = \frac{1+p^{n+1}}{(1+p^2)^{\frac{n+1}{2}}}\,.
\end{equation*}
We obtain
\begin{equation*}
\phi'(p) = \frac{p\,(n+1)}{(1+p^2)^{\frac{n+3}{2}}}\,(p^{n-1} -1)\,.
\end{equation*}
Therefore, $\phi$ is decreasing on $(0,1)$, increasing on $(1,\infty)$ and attains minimum at $1$,
 $\phi(1)- 2/2{(n+1)/2}<1/2^{1/n}$. We observe that the left-hand side of the inequality \pref{borelldif3}
is invariant with respect to the mapping $p\to 1/p$. Therefore, we consider only $p\geq 1$. For such values of $p$ we define 
$y(p)>0$ by the identity
\begin{equation*}
(1+p^{n+1})\,\(\frac{1+y^2(p)}{1+p^2}   \)^{\frac{n+1}{2}} \,= \, \frac{1}{2^{1/n}}\,.
\end{equation*}
Differentiating, we obtain
\begin{equation*}
y'(p) = \frac{p}{y(p)}\,\frac{1+y^2(p)}{1+p^2} \,(1-p^{n-1})  \,.
\end{equation*}
Hence $y(p)$ is decreasing on $(1,\infty)$ while $h(p)$ is increasing since we have:
\begin{equation*}
\frac{h'(p)}{(1+h^2(p))^{n+1}{2}}= \frac{1+p^{n+1}}{(1+p^2)^{\frac{n+1}{2}}}\,.
\end{equation*}
Moreover, for $p=1$ we obtain from the case 1 that $y(1)=z(1)$ so from the monotonicity of $y(p)$ and 
$h(p)$ we obtain
\begin{equation*}
y(p) \leq y(1) = z(1) \leq h(1) \leq h(p)
\end{equation*}
which implies that 
\begin{equation*}
(1+p^{n+1})\,\(\frac{1+h^2(p)}{1+p^2}   \)^{\frac{n+1}{2}}\,\geq \,
(1+p^{n+1})\,\(\frac{1+y^2(p)}{1+p^2}   \)^{\frac{n+1}{2}}=\frac{1}{2^{1/n}}\,,
\end{equation*}
which ends the proof of the case 2. \\
To show the inequality \pref{ad1} we use the concavity of the function $g$. 
Denote by $\psi:=\psi_{a,b}$
\begin{equation*}
\psi_{a,b}(r) = g(a-r,b+r)\,.
\end{equation*}
Function $\psi_{a,b}(r)$ is concave for $r>0$. Consequently, by concavity we obtain
\begin{equation*}
\frac{\psi_{a,b}(r) - \psi_{a,b}(0)}{r}\geq \psi_{a,b} '(r)=\psi_{a-r,b+r} '(0)\,.
\end{equation*}
However, by the expressions for derivatives of the function $g$ and the inequality \pref{borelldif1} we obtain
\begin{equation*}
\psi_{a-r,b+r} '(0) = \frac{f_{\alpha,n}(a-r)}{g(a-r,b+r)} + \frac{f_{\alpha,n}(b+r)}{g(a-r,b+r)}\geq \frac{1}{2^{1/n}}
\end{equation*}
which finally gives \pref{ad1} and ends the proof of the first part of the theorem. \\

To prove the second part, observe that from Lemma 5.1 in \cite{BZ} we obtain for $a\/b= -\,(1+\alpha^2)$ that
$\mu_{\alpha}(a,b) > 1/2$, consequently $g_{\alpha}(a,b)=t>0$ for such pairs $(a,b)$ thus we exclude that 
case from our further considerations. What thus remains is the case $-a=b=p>0$ and, as before, we put
$h(p)= g_{\alpha}(-\/p, p)$. 
We note that our inequality reduces to
\begin{equation*}
h'(p) = 2\,\(\frac{1+\alpha^2+ h^2(p)}{1+\alpha^2 + p^2} \)^{\frac{n+1}{2}}\geq 1\,.
\end{equation*}
or, equivalently
\begin{equation*}
\frac{1}{(1+\alpha^2 + h(p)^2)^{\frac{n+1}{2}}} \leq \frac{2}{(1+\alpha^2 + p^2)^{\frac{n+1}{2}}}\,.
\end{equation*}
However, this means that 
\begin{equation*}
per(-\/p,p)\geq per (-\,\infty, g_{\alpha}(-p,p))
\end{equation*}
and the fundamental Lemma 5.2 in \cite{BZ} proves that the above inequality holds whenever $\mu_{\alpha}(-p,p)<1/2$, thus ending 
the proof of the second part of the theorem in the differential form. The general version can again be obtained 
from the concavity of the function $g_{\alpha}$.   
\end{proof}
\subsubsection{Landau-Shepp-type Inequality}
\begin{thm}
For every $a<b$ and every $\alpha \,\geq\, 0$ the following holds
\begin{equation}
\label{mult1}
g_{\alpha}(r\/a,r\/b) \,\geq r\, g_{\alpha}(a,b) \quad {\text{if and only if}} \quad r\geq 1.
\end{equation}
\end{thm}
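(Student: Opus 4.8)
\emph{Proof proposal.} The plan is to obtain both directions of \pref{mult1} at once from a single monotonicity statement. Fix $a<b$ and set $\rho(r)=g_\alpha(ra,rb)/r$, so that $\rho(1)=g_\alpha(a,b)$; since $r>0$, the assertion $g_\alpha(ra,rb)\ge r\,g_\alpha(a,b)$ is equivalent to $\rho(r)\ge\rho(1)$, and \pref{mult1} will follow once $\rho$ is shown to be \emph{strictly} increasing on $(0,\infty)$. Using the first derivatives $\partial g_\alpha/\partial a=-f_{\alpha,n}(a)/f_{\alpha,n}(g_\alpha)$ and $\partial g_\alpha/\partial b=f_{\alpha,n}(b)/f_{\alpha,n}(g_\alpha)$ computed earlier, and writing $A=ra$, $B=rb$, $g=g_\alpha(A,B)$, a direct computation gives
\begin{equation*}
\rho'(r)=\frac{1}{r^2\,f_{\alpha,n}(g)}\bigl[\,\Theta(B)-\Theta(A)-\Theta(g)\,\bigr],\qquad \Theta(x):=x\,f_{\alpha,n}(x).
\end{equation*}
As $f_{\alpha,n}(g)>0$, everything reduces to the single inequality
\begin{equation*}
\Theta(B)-\Theta(A)>\Theta\bigl(g_\alpha(A,B)\bigr)\qquad\text{for all }A<B.
\end{equation*}

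To prove this I would freeze the level $t$ and study the extrema of $\Theta(B)-\Theta(A)$ along the arc $\{g_\alpha(A,B)=t,\ A<B\}$ (on which $\Theta(t)$ is constant). The Lagrange conditions read $\Theta'(A)/f_{\alpha,n}(A)=\Theta'(B)/f_{\alpha,n}(B)$, and a short computation gives $\Theta'(x)/f_{\alpha,n}(x)=(1+\alpha^2-n x^2)/(1+\alpha^2+x^2)$, which is even and strictly decreasing in $|x|$; hence the only interior critical configuration is the symmetric one $A=-B$. Because $g_\alpha(a,b)\to b$ as $a\to-\infty$ and $g_\alpha(a,b)\to -a$ as $b\to+\infty$, while $\Theta$ is odd and vanishes at $\pm\infty$, the quantity $\Psi:=\Theta(B)-\Theta(A)-\Theta(t)$ tends to $0$ at both ends of the arc. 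With a single interior critical point and vanishing endpoint values, the sign of $\Psi$ on the whole arc is governed by its value at the symmetric configuration, so it suffices to prove the inequality for $A=-B$.

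Writing $B=-A=p>0$ and $h(p)=g_\alpha(-p,p)$, the residual inequality is
\begin{equation*}
2\,\Theta(p)>\Theta\bigl(h(p)\bigr),\qquad p>0.
\end{equation*}
If $h(p)\le 0$ this is immediate, the left side being positive and the right side nonpositive. If $h(p)>0$, I would combine the two facts already established. In the cross-multiplied form $2p\,(1+\alpha^2+h^2)^{(n+1)/2}>h\,(1+\alpha^2+p^2)^{(n+1)/2}$, the concavity inequality \pref{kappaconv2}, namely $2\chi(h(p))\le\chi(p)$ with $\chi(x)=x(1+\alpha^2+x^2)^{(n-1)/2}$, provides the upper bound $h(p)\le\tfrac{p}{2}\bigl((1+\alpha^2+p^2)/(1+\alpha^2+h^2)\bigr)^{(n-1)/2}$; substituting it and cancelling reduces the target \emph{exactly} to the Borell-type estimate \pref{borelldif2}, $(1+\alpha^2+h^2(p))/(1+\alpha^2+p^2)>2^{-2/n}$. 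This closes the symmetric case and hence the theorem.

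The main obstacle is precisely this symmetric inequality $2\Theta(p)>\Theta(h(p))$: it is where the lack of a closed form for $g_\alpha$ bites, and it cannot be settled by monotonicity of $\Theta$ alone, since $\Theta$ first rises and then falls. The crux is that the two independently proved inequalities fit together perfectly, the concavity bound converting the target into the ratio controlled by the Borell estimate. A technical point to verify with care is that \pref{borelldif2} holds with \emph{strict} inequality at every $p$ with $h(p)>0$ (which follows from $h(p)>z(p)\ge 0$ when $p\ge p_1$, and from $h^2(p)\ge 0$ together with $p<p_1$ otherwise), because strictness is exactly what upgrades $\rho'\ge 0$ to strict monotonicity and thereby delivers the ``only if'' direction.
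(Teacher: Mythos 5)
Your proposal is correct, and its skeleton coincides with the paper's: both pass to the differential form $\Theta(b)-\Theta(a)\geq \Theta(g_{\alpha}(a,b))$ with $\Theta(x)=x\,f_{\alpha,n}(x)$ (the paper's \pref{multdif0}), reduce by the Lagrange method along a level set $g_{\alpha}=t$ to the symmetric configuration $a=-b$ (your critical-point equation $\Theta'(x)/f_{\alpha,n}(x)=(1+\alpha^2-nx^2)/(1+\alpha^2+x^2)$ is the same as the paper's $a\,f'_{\alpha}(a)/f_{\alpha}(a)=b\,f'_{\alpha}(b)/f_{\alpha}(b)$), and both finish by integrating the differential inequality to get that $g_{\alpha}(ra,rb)/r$ is increasing. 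Where you genuinely diverge is the symmetric case $2\Theta(p)\geq\Theta(h(p))$, equivalently $h'(p)\geq h(p)/p$ (the paper's \pref{multdif1}): the paper proves a \emph{new} estimate there, namely $h(p)/p\leq 2^{-1/n}$ (inequality \pref{multdif2}), via the auxiliary function $\Lambda$ and a scaling reduction to $\alpha=0$, and pairs it with $h'(p)\geq 2^{-1/n}$ from \pref{ad2}; you instead recycle two inequalities already established earlier in the paper: the concavity estimate $2\chi(h(p))\leq\chi(p)$ of \pref{kappaconv2} gives $h(p)\leq \frac{p}{2}\bigl((1+\alpha^2+p^2)/(1+\alpha^2+h^2(p))\bigr)^{(n-1)/2}$, and after substitution the exponents combine to $(n+1)/2+(n-1)/2=n$, so the cross-multiplied target collapses exactly to the ratio bound \pref{borelldif2} with constant $2^{-2/n}$ --- I checked this algebra and it is right. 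Your route buys two things. First, economy: no fresh integral estimate and no scaling step, everything done for general $\alpha$ from results the paper proves anyway before this theorem. Second, and more substantively, strictness: the paper's final step only yields $\frac{d}{dr}\bigl[g_{\alpha}(ra,rb)/r\bigr]\geq 0$, which for $r<1$ gives merely $g_{\alpha}(ra,rb)\leq r\,g_{\alpha}(a,b)$ and so does not by itself deliver the ``only if'' half of \pref{mult1}; your observation that \pref{borelldif2} is strict whenever $h(p)>0$ (for $p\geq p_1$ because the paper's derivative computation for $H(h(p))-H(z(p))$ is strictly negative, forcing $h(p)>z(p)\geq 0$; for $p<p_1$ directly from $h^2(p)\geq 0$) upgrades the monotonicity of $g_{\alpha}(ra,rb)/r$ to strict and closes that direction cleanly. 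The only place where you should add a line of detail is the reduction to the symmetric point: the argument that vanishing limits of $\Psi$ at both ends of the arc plus a unique interior critical point force the sign to be governed by the symmetric value needs the attainment-of-infimum step made explicit (if $\Psi$ were $\leq 0$ somewhere, the infimum would be attained at an interior critical point distinct from the symmetric one, a contradiction) --- but this is the same tacit logic the paper itself relies on when it invokes the Lagrange method.
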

\begin{proof}
We write the differential form of the inequality \pref{mult1}. To do this, we rewrite \pref{mult1}
in the form:
\begin{equation*}
\frac{g_{\alpha}(r\/a,r\/b) - g_{\alpha}(a,b)}{r-1}\, \geq \, g_{\alpha}(a,b)
\end{equation*}
and, when $r \to 1$, we obtain
\begin{equation*}
\frac{d\,g_{\alpha}(a,b) }{d\,r} \,\geq \, g_{\alpha}(a,b)\,,
\end{equation*}
or, equivalently,
\begin{equation}
\label{shepp0}
\frac{\partial \,g_{\alpha}(a,b) }{\partial\,a}\,a +
\frac{\partial \,g_{\alpha}(a,b) }{\partial\,b}\,b\, \geq \, g_{\alpha}(a,b)\,.
\end{equation}
Taking into account the form of the partial derivatives of $g_{\alpha}$, we obtain
\begin{equation*}
\frac{-\,f_{\alpha}(a)}{f_{\alpha}(g(a,b))}\,a + \frac{f_{\alpha}(b)}{f_{\alpha}(g(a,b))}\,b \,\geq \, g_{\alpha}(a,b)
\end{equation*}
or, equivalently
\begin{equation}
\label{multdif0}
-\,f_{\alpha}(a)\,a + f_{\alpha}(b)\,b \, \geq \, g_{\alpha}(a,b)\,f_{\alpha}(g(a,b))\,.
\end{equation}
We show that the inequality \pref{multdif0} holds using Lagrange method. 
We put
\begin{equation*}
F(a,b) = -\,f_{\alpha}(a)\,a + f_{\alpha}(b)\,b \, +\,\lambda\, g_{\alpha}(a,b)
\end{equation*}
and obtain
\begin{eqnarray*}
&{}&\frac{\partial\,F(a,b)}{\partial a} = -\,f_{\alpha}'(a)\,a - f_{\alpha}(a)\, +
\,\lambda \frac{ \partial \,g_{\alpha}(a,b)}{\partial \,a} = 0\\
&{}&\frac{\partial\,F(a,b)}{\partial b} = f_{\alpha}'(b)\,b + f_{\alpha}(b)\, +
\,\lambda \frac{ \partial \,g_{\alpha}(a,b)}{\partial \,b} = 0\\
\end{eqnarray*}
Taking again into account the form of partial derivatives of $g_{\alpha}$ we obtain
\begin{eqnarray*}
&{}&  -\,f_{\alpha}'(a)\,a - f_{\alpha}(a)\, -
\,\lambda \frac{f(a)}{ f(g_{\alpha}(a,b))} = 0\\
&{}&f_{\alpha}'(b)\,b + f_{\alpha}(b)\, +
\,\lambda \frac{f(b)}{ f(g_{\alpha}(a,b))} = 0\\
\end{eqnarray*}
which gives
\begin{equation*}
\frac{f_{\alpha}'(a)}{f(a)}\,a = \frac{f_{\alpha}'(b)}{f(b)}\,b\,.
\end{equation*}
Thus, we have obtained
\begin{equation*}
\frac{-\,(n+1)\,a^2}{1+\alpha^2 + a^2} = \frac{-\,(n+1)\,b^2}{1+\alpha^2 + b^2} \quad \Rightarrow a = \pm b\,.
\end{equation*}
We now put $p=-a=b>0$ and $h(p) = g_{\alpha} (-p,p)$ and consider \pref{multdif0} for these values of $a$ and $b$:
\begin{equation*}
2\,f_{\alpha}(p)\,p \,\geq \, f_{\alpha}(h(p))\,h(p)\,.
\end{equation*}
Taking into account the formula for  $h'(p)$ we obtain the equivalent form of the desired inequality:
\begin{equation}
\label{multdif1}
h'(p) \,\geq \, \frac{h(p)}{p}\,.
\end{equation}
We show that the following holds
\begin{equation}
\label{multdif2}
 \frac{h(p)}{p}\, \leq  \, \frac{1}{2^{1/n}}\,.
\end{equation}
In view of the inequality \pref{ad2}, this will end the proof of the theorem.\\
We prove the inequality \pref{multdif2} for the case $f_{0,n}$, in view of the scaling property.
For this purpose, define
\begin{equation*}
\Lambda(p) = 2\,\int_0^p \frac{dt}{(1+t^2)^{\frac{n+1}{2}}} 
\,-\,\int_{-\infty}^{p/2^{1/n}} \frac{dt}{(1+t^2)^{\frac{n+1}{2}}}\,. 
\end{equation*}
We obtain
\begin{equation*}
\Lambda(0) \,<\,0\,, \quad \Lambda(\infty) = 2\,\int_0^{\infty} \frac{dt}{(1+t^2)^{\frac{n+1}{2}}} 
\,-\,\int_{-\infty}^{\infty} \frac{dt}{(1+t^2)^{\frac{n+1}{2}}}=0\,.
\end{equation*}
Moreover,
\begin{eqnarray*}
&{}&\Lambda'(p) = \frac{2}{(1+p^2)^{\frac{n+1}{2}}} - \frac{1}{2^{1/n}} \frac{1}{(1+p^2/2^{2/n})^{\frac{n+1}{2}}}\\
&=& \frac{2}{(1+p^2)^{\frac{n+1}{2}}} - \frac{2}{(2^{2/n}+p^2)^{\frac{n+1}{2}}}\,\geq 0 
\end{eqnarray*}
hence $\Lambda(p)\,\leq\,0$ which means that
\begin{equation*}
\int_{-\infty}^{h(p)}\frac{dt}{(1+t^2)^{\frac{n+1}{2}}}=  
 2\,\int_0^p \frac{dt}{(1+t^2)^{\frac{n+1}{2}}} \,\leq\,
\int_{-\infty}^{p/2^{1/n}} \frac{dt}{(1+t^2)^{\frac{n+1}{2}}}\,, 
\end{equation*}
which proves the inequality  \pref{shepp0}. To finish the proof observe that \pref{shepp0}
holds for all $a,b$, with $a<b$. We rewrite this putting $ra$ in place of $a$ and $rb$ in place 
of $b$ to obtain
\begin{equation*}
\frac{\partial \,g_{\alpha}(r\/a,r\/b) }{\partial\,(r\/a)}\,(r\/a) +
\frac{\partial \,g_{\alpha}(r\/a,r\/b) }{\partial\,(r\/b)}\,(r\/b)\, \geq \, g_{\alpha}(r\/a,r\/b)\,.
\end{equation*}
The above inequality, however, can  in turn be written down as
\begin{equation*}
\frac{d}{d\/r}\[\frac{g_{\alpha}(r\/a,r\/b)}{r}  \]\,\geq\, 0
\end{equation*}
which means that 
\begin{equation*}
\frac{g_{\alpha}(r\/a,r\/b)}{r}  \quad {\text{is increasing as a function of}}\,\,\, r.
\end{equation*}
The proof of the theorem is completed.
\end{proof}
\subsection{Concavity of the function $g_{\bf{y}}(a,b)$}

Let ${\bf{y}} \in {\mathbb{R}}^{n-1}$  and consider the following density
\begin{equation*}
f_{|{\bf{y}}|,n}(t) =\frac{c}{(1+|{\bf{y}}|^2 + t^2)^{\frac{n+1}{2}}}\,, 
 \end{equation*}
being a one-dimensional section of the $n$-dimensional isotropic Cauchy distribution in the direction of ${\bf{y}}$.
We denote this density as $f_{\alpha,n}(t)$ with $\alpha = |{\bf{y}}|$.

As before, for $z_1<z_2$, we define the function $g(z_1,z_2):=g_{\alpha}(z_1,z_2)$ by the identity
\begin{equation*}
\int_{z_1}^{z_2}\frac{dt}{(1+\alpha^2 +t^2)^{\frac{n+1}{2}}}=
\int_{-\,\infty}^{g_{\alpha}(z_1,z_2)}\frac{dt}{(1+\alpha^2 +t^2)^{\frac{n+1}{2}}}\,.
\end{equation*}
By introducing a new variable $u$ by the formula $t=\sqrt{1+\alpha^2}\,u$ we obtain the following important scaling identity for functions $g$:
\begin{equation}
\label{scaling}
g_{\alpha}(z_1,z_2)= \sqrt{1+\alpha^2}\,g_0(\frac{z_1}{\sqrt{1+\alpha^2}},\frac{z_2}{\sqrt{1+\alpha^2}})\,.
\end{equation}
We prove the following
\begin{thm}
The function 
\begin{equation}
\label{concavity1}
{\mathbb{R}}^{n-1} \times {\mathbb{R}}^2 \ni ({\bf{y}},a,b) \to g_{|{\bf{y}}|}(a,b)\,, \quad a<b\,
\end{equation}
is concave, as a function of $(n+1)$ variables, for $a<b$.
\end{thm}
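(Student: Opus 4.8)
The plan is to exploit the scaling identity \pref{scaling}, which exhibits $g_{|\mathbf{y}|}(a,b)$ as a \emph{perspective transform} of the single concave function $g_0:=g_{0,n}$. Writing $s=s(\mathbf{y})=\sqrt{1+|\mathbf{y}|^2}$, the identity \pref{scaling} reads
\[
g_{|\mathbf{y}|}(a,b) = s\, g_0\!\(\frac{a}{s},\frac{b}{s}\) = P\(a,b,s(\mathbf{y})\),
\]
where $P(a,b,s):=s\, g_0(a/s,\,b/s)$ is the perspective of $g_0$, defined on $\{(a,b,s):s>0,\ a<b\}$. This reduces the theorem to two facts about $P$ together with a monotone-composition argument, all of which draw on results already established above.

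First I would record that $P$ is jointly concave in $(a,b,s)$ on its domain, which is the classical statement that the perspective of a concave function is concave. Concretely, for two admissible points $(a_0,b_0,s_0)$ and $(a_1,b_1,s_1)$ one sets $s_\lambda=(1-\lambda)s_0+\lambda s_1$ and $\theta=\lambda s_1/s_\lambda$, rewrites $\bigl(a_\lambda/s_\lambda,\,b_\lambda/s_\lambda\bigr)$ as the $\theta$-convex combination of $(a_0/s_0,b_0/s_0)$ and $(a_1/s_1,b_1/s_1)$, and applies concavity of $g_0$, which holds for $\alpha=0$ by the concavity theorem above (the domain $\{a<b\}$ being convex). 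Multiplying through by $s_\lambda$ yields $P(a_\lambda,b_\lambda,s_\lambda)\ge (1-\lambda)P(a_0,b_0,s_0)+\lambda P(a_1,b_1,s_1)$.

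Second, I would show that $P$ is nonincreasing in $s$, and here the Landau--Shepp-type inequality \pref{mult1} (with $\alpha=0$) does the work directly: for $0<s_2\le s_1$ put $r=s_1/s_2\ge 1$; applying \pref{mult1} to the pair $(a/s_1,\,b/s_1)$ gives $g_0(a/s_2,b/s_2)=g_0(r\,a/s_1,\,r\,b/s_1)\ge r\,g_0(a/s_1,b/s_1)$, whence $P(a,b,s_2)\ge P(a,b,s_1)$. Finally, $s(\mathbf{y})=\sqrt{1+|\mathbf{y}|^2}$ is convex in $\mathbf{y}$, being the restriction to the hyperplane $\{y_0=1\}$ of the Euclidean norm $(y_0,\mathbf{y})\mapsto\sqrt{y_0^2+|\mathbf{y}|^2}$.

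These three facts combine by the standard monotone-composition rule. Given two points $(\mathbf{y}_0,a_0,b_0)$ and $(\mathbf{y}_1,a_1,b_1)$, set $s_\lambda=(1-\lambda)s(\mathbf{y}_0)+\lambda s(\mathbf{y}_1)$; convexity of $s(\cdot)$ gives $s(\mathbf{y}_\lambda)\le s_\lambda$, monotonicity of $P$ in its last slot gives $P(a_\lambda,b_\lambda,s(\mathbf{y}_\lambda))\ge P(a_\lambda,b_\lambda,s_\lambda)$, and joint concavity of $P$ bounds the latter below by $(1-\lambda)g_{|\mathbf{y}_0|}(a_0,b_0)+\lambda g_{|\mathbf{y}_1|}(a_1,b_1)$; chaining these inequalities yields concavity of $(\mathbf{y},a,b)\mapsto g_{|\mathbf{y}|}(a,b)$. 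The only point requiring care is matching domains throughout the composition: one must verify that $a_\lambda<b_\lambda$ and $s\ge 1>0$ persist at every intermediate point, so that $P$ is defined and the monotonicity step is legitimate. This is immediate, since $s(\mathbf{y})\ge 1$ always and $a<b$ is preserved under convex combination. I expect the genuine inputs to be exactly the two previously proven theorems — concavity of $g_0$ and the Landau--Shepp inequality \pref{mult1} — with everything else being the perspective-and-composition bookkeeping.
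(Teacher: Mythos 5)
Your proof is correct, and it reaches the theorem by a genuinely different route than the paper. The paper differentiates: using the scaling identity \pref{scaling} it computes the full Hessian of $g_{\alpha}(a,b)$ in the three variables $(\alpha,a,b)$, reduces the $3\times 3$ determinant by row operations to the product of the two-variable Hessian determinant and the factor $\frac{1}{\alpha(1+\alpha^2)}\frac{\partial g_{\alpha}}{\partial \alpha}$, and then proves $\frac{\partial g_{\alpha}}{\partial \alpha}<0$ by exactly the computation you give for monotonicity of $P$ in $s$ (the Landau--Shepp inequality \pref{mult1} at $\alpha=0$ combined with \pref{scaling}); it concludes, as you do, by composing with a convex function of ${\bf{y}}$ (the norm $|{\bf{y}}|$ there, $s({\bf{y}})=\sqrt{1+|{\bf{y}}|^2}$ in your version --- the same thing after the change of scale variable). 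Your perspective-transform observation replaces the entire Hessian stage: joint concavity of $P(a,b,s)=s\,g_0(a/s,b/s)$ on the convex cone $\{s>0,\ a<b\}$ falls out of the perspective construction with no differentiation, and it also explains structurally why the paper's $3\times 3$ determinant degenerates: $P$ is positively homogeneous of degree one, so its Hessian is singular along rays. The genuine inputs are identical in both arguments --- concavity of $g_0$ in $(a,b)$ (the $\alpha=0$ case of the earlier concavity theorem) and the inequality \pref{mult1} --- but what your route buys is robustness and economy: the paper certifies negative semidefiniteness of a degenerate $3\times 3$ form from the sign of a leading second derivative, the $2\times 2$ Hessian determinant, and the full determinant, which is the delicate part of its proof, whereas the perspective-plus-monotone-composition argument requires no smoothness of $g_0$ at all and its only points of care are the domain checks ($s({\bf{y}})\ge 1>0$ and preservation of $a<b$ under convex combination), which you verify correctly.
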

\begin{proof}
We begin by  computing  the derivatives, using the identity \pref{scaling}:
\begin{eqnarray*}
&{}&\frac{\partial g_{\alpha}}{\partial z_1}|_{(z_1,z_2)} = 
\frac{\partial g_{0}}{\partial z_1}|_{(\frac{z_1}{\sqrt{1+\alpha^2}},\frac{z_2}{\sqrt{1+\alpha^2}})};
\quad
\frac{\partial g_{\alpha}}{\partial z_2}|_{(z_1,z_2)} = 
\frac{\partial g_{0}}{\partial z_2}|_{(\frac{z_1}{\sqrt{1+\alpha^2}},\frac{z_2}{\sqrt{1+\alpha^2}})};
\\
&{}&\frac{\partial g_{\alpha}}{\partial \alpha} = 
\frac{\alpha}{1+\alpha^2}[g_{\alpha}-z_1\,\frac{\partial g_{\alpha}}{\partial z_1}
- z_2\,\frac{\partial g_{\alpha}}{\partial z_2}]\,.
\end{eqnarray*}
Differentiating once again with respect to $\alpha$, we obtain
\begin{eqnarray*}
&{}&\frac{\partial^2 g_{\alpha}}{\partial \alpha^2} = 
\frac{1-\alpha^2}{(1+\alpha^2)^2}[g_{\alpha}-z_1\,\frac{\partial g_{\alpha}}{\partial z_1}
- z_2\,\frac{\partial g_{\alpha}}{\partial z_2}]
+ \frac{\alpha}{1+\alpha^2}[\frac{\partial g_{\alpha}}{\partial \alpha} - 
z_1\,\frac{\partial^2 g_{\alpha}}{\partial z_1^2}\,\frac{(-\,z_1\,\alpha)}{(1+\alpha^2)^{3/2}}\\
&-& 
z_1\,\frac{\partial^2 g_{\alpha}}{\partial z_1\,\partial z_2}\,\frac{(-\,z_2\,\alpha)}{(1+\alpha^2)^{3/2}}
- z_2\,\frac{\partial^2 g_{\alpha}}{\partial z_2^2}\,\frac{(-\,z_2\,\alpha)}{(1+\alpha^2)^{3/2}}
-z_2\,\frac{\partial^2 g_{\alpha}}{\partial z_1\,\partial z_2}\,\frac{(-\,z_1\,\alpha)}{(1+\alpha^2)^{3/2}}
]\,.
\end{eqnarray*}
Taking into account the form of $\frac{\partial g_{\alpha}}{\partial \alpha}$ 
we obtain
\begin{equation*}
\frac{\partial^2 g_{\alpha}}{\partial \alpha^2} = \frac{1}{\alpha\,(1+\alpha^2)}\,\frac{\partial g_{\alpha}}{\partial \alpha}
+  \frac{\alpha^2}{(1+\alpha^2)^2}\,[z_1^2\,\frac{\partial^2 g_{\alpha}}{\partial z_1^2} 
+2\,z_1\,z_2\,\frac{\partial^2 g_{\alpha}}{\partial z_1\,\partial z_2}
+z_2^2\,\frac{\partial^2 g_{\alpha}}{\partial z_2^2} 
]\,.
\end{equation*}
The above calculations enable us to write down the 
Hessian of $g_{\alpha}(z_1,z_2)$ as a function of three variables in the following form:
\vskip 15 pt
\begin{math}
\begin{bmatrix} 
\frac{\partial^2 g_{\alpha}}{\partial z_1^2}; & \frac{\partial^2 g_{\alpha}}{\partial z_1\,\partial z_2}; &
\frac{-\,\alpha}{1+\alpha^2}\(z_1\,\frac{\partial^2 g_{\alpha}}{\partial z_1^2} +
  z_2\,\frac{\partial^2 g_{\alpha}}{\partial z_1\,\partial z_2} \) \\
\frac{\partial^2 g_{\alpha}}{\partial z_1\,\partial z_2};  & \frac{\partial^2 g_{\alpha}}{\partial z_2^2};&
\frac{-\,\alpha}{1+\alpha^2}\(z_2\,\frac{\partial^2 g_{\alpha}}{\partial z_2^2} +
  z_1\,\frac{\partial^2 g_{\alpha}}{\partial z_1\,\partial z_2} \) \\
	\frac{-\,\alpha}{1+\alpha^2}\(z_1\,\frac{\partial^2 g_{\alpha}}{\partial z_1^2} +
  z_2\,\frac{\partial^2 g_{\alpha}}{\partial z_1\,\partial z_2} \);&
	\frac{-\,\alpha}{1+\alpha^2}\(z_2\,\frac{\partial^2 g_{\alpha}}{\partial z_2^2} +
  z_1\,\frac{\partial^2 g_{\alpha}}{\partial z_1\,\partial z_2} \);&
	\frac{\partial^2 g_{\alpha}}{\partial {\alpha}^2}\\
 \end{bmatrix}\,.
\end{math}
\vskip 15 pt
We compute the determinant of the above matrix by multiplying the first row by $\frac{\alpha}{1+\alpha^2}\,z_1$
and adding to the third row; analogously, we multiply the second row by $\frac{\alpha}{1+\alpha^2}\,z_2$ and add to the third one. After that  we get the determinant of the matrix
\vskip 15 pt
\hskip 120 pt
\begin{math}
\begin{bmatrix} 
\frac{\partial^2 g_{\alpha}}{\partial z_1^2}; & \frac{\partial^2 g_{\alpha}}{\partial z_1\,\partial z_2}; &
\frac{-\,\alpha}{1+\alpha^2}\(z_1\,\frac{\partial^2 g_{\alpha}}{\partial z_1^2} +
  z_2\,\frac{\partial^2 g_{\alpha}}{\partial z_1\,\partial z_2} \) \\
\frac{\partial^2 g_{\alpha}}{\partial z_1\,\partial z_2};  & \frac{\partial^2 g_{\alpha}}{\partial z_2^2};&
\frac{-\,\alpha}{1+\alpha^2}\(z_2\,\frac{\partial^2 g_{\alpha}}{\partial z_2^2} +
  z_1\,\frac{\partial^2 g_{\alpha}}{\partial z_1\,\partial z_2} \) \\
	0;&
	0;& \frac{1}{\alpha\,(1+\alpha^2)}
	\frac{\partial g_{\alpha}}{\partial {\alpha}}\\
 \end{bmatrix}\,.
\end{math}
\vskip 15 pt
We compute the determinant of the above matrix by developing it with respect to the  third row. This reduces determinant to the product of the determinant of the first $2\times 2$ matrix by the term 
$\frac{1}{\alpha\,(1+\alpha^2)}
	\frac{\partial g_{\alpha}}{\partial {\alpha}}$.
	Since we already know that $g_{\alpha}(z_1,z_2)$ is concave, as a function of $z_1$, $z_2$, everything reduces to the proof that the derivative $ 	\frac{\partial g_{\alpha}}{\partial {\alpha}}$ is negative, that is, that the function
	$g_{\alpha}(z_1,z_2)$ is decreasing, as a function of $\alpha$.
	
This, however, follows from the multiplicative form of the regularization inequality:
\begin{equation*}
g_0(r\/z_1, r\/z_2) \geq r\,g_0(z_1, z_2)\quad {\text{for}} \quad r\geq 1
\end{equation*}
as follows: assume that $0<\alpha_1< \alpha_2$. From the above property and the scaling property \pref{scaling} of the function 
$g$  we obtain
\begin{eqnarray*}
&{}& g_{\alpha_1}(z_1,z_2) = \sqrt{1+\alpha_1^2}\,g_0\(\frac{z_1}{\sqrt{1+\alpha_1^2}}, \frac{z_2}{\sqrt{1+\alpha_1^2}}\)\\
&=&\sqrt{1+\alpha_1^2}\,
g_0\(\frac{z_1}{\sqrt{1+\alpha_2^2}}\,\sqrt{\frac{1+\alpha_2^2}{1+\alpha_1^2}}, \frac{z_2}{\sqrt{1+\alpha_2^2}}\,
\sqrt{\frac{1+\alpha_2^2}{1+\alpha_1^2}}\)\\
&\geq& \sqrt{1+\alpha_1^2}\,\sqrt{\frac{1+\alpha_2^2}{1+\alpha_1^2}}
g_0\(\frac{z_1}{\sqrt{1+\alpha_2^2}}, \frac{z_2}{\sqrt{1+\alpha_2^2}}
\)\\
&=&  \sqrt{1+\alpha_2^2}\,g_0\(\frac{z_1}{\sqrt{1+\alpha_2^2}}, \frac{z_2}{\sqrt{1+\alpha_2^2}}\)
=g_{\alpha_2}(z_1,z_2)\,.
\end{eqnarray*}
The above inequality shows that the function $g_{\alpha}(a,b)$ is concave, as a function of $(\alpha,a,b)$, for $a<b$. 
Since the norm ${\bf{y}} \to |{\bf{y}}|$ is a convex function and $g_{\alpha}(a,b)$ is decreasing as a function of $\alpha$, the theorem follows.
\end{proof}

\end{document}